\documentclass[a4paper]{article}

\usepackage[british]{babel}
\usepackage[utf8]{inputenc}
\usepackage[T1]{fontenc}

\usepackage{amsmath, amsthm, amsfonts, amssymb, amsthm}
\usepackage{mathrsfs}

\usepackage{float}
\usepackage{longtable}
\usepackage{forest}

\usepackage{enumerate}

\usepackage{microtype} 

\newtheorem{theorem}{Theorem}[]
\newtheorem{corollary}[theorem]{Corollary}
\newtheorem{lemma}[theorem]{Lemma}
\newtheorem{proposition}[theorem]{Proposition}
\newtheorem{example}[theorem]{Example}
\newtheorem{conjecture}[theorem]{Conjecture}
\newtheorem{remark}[theorem]{Remark}
\newtheorem{algorithm}[theorem]{Algorithm}

\usepackage{graphicx}
\usepackage[colorinlistoftodos]{todonotes}
\usepackage[colorlinks=true, allcolors=blue]{hyperref}

\title{Internal numerical semigroups}
\author{Mario Casas$^{1,}$\footnote{Electronic
    adress: mariocp11@correo.ugr.es},\ José A. Madrid$^{1,}$\footnote{Electronic
    adress: madrid@ugr.es}\ and J. C. Rosales$^{1,}$\footnote{Electronic
    adress: jrosales@ugr.es}\\
  \small{$^{1}$Department of Algebra, Faculty of Sciences, University of Granada,} \\
  \small{E-18071 Granada, Spain}} 

\date{}

\begin{document}
\maketitle

\begin{center}
  \Large{Version : $19^{th}$ August 2026 }
\end{center}
\begin{abstract}
In this paper the tree structure of numerical semigroups is studied. An internal numerical semigroup is a semigroup located in an internal node of the tree. Analogous a leaf numerical semigroup is placed in a leaf node. Internal semigroups with fixed multiplicity, Frobenius number or genus are studied by providing algorithms to construct all of them. Several conjectures are established, for example, in each case (fixed multiplicity, fixed Frobenius number and fixed genus respectively), the results suggest that there are always more internal than leaf numerical semigroups. Finally, numerical semigroups with fixed multiplicity and Frobenius number simultaneously are investigated. In this case with two invariants fixed, moreover closed formulas to count the number of internal and leaf semigroups are provided for some values of multiplicity and Frobenius number.    
\end{abstract}

\noindent \textbf{Keywords:} Frobenius number, genus, multiplicity, Frobenius variety, internal numerical semigroup.

\section{Introduction}
\label{sec:intro}
Let $\mathbb{Z}$ be the set of integer numbers and $\mathbb{N}=\{x\in\mathbb{Z} \colon x\geq 0\}.$ A submonoid of $(\mathbb{N},+)$ is a subset of $\mathbb{N}$ closed under the addition and containing $0$. A numerical semigroup is a submonoid $S$ of $(\mathbb{N},+)$ such that $\mathbb{N}\backslash S$ is finite.

If $S$ is a numerical semigroup, then $m(S)=\min(S\backslash\{0\})$, $F(S)=\max(\mathbb{Z}\backslash S)$ and $g(S)=\#(\mathbb{N}\backslash S)$, where $\#(A)$ denotes the cardinality of a set $A$, are three important invariants called multiplicity, Frobenius number and genus of $S$ respectively.

If $A$ is a non empty subset of $\mathbb{N}$, then we denote by $\langle A \rangle$ the submonoid of $(\mathbb{N}, +)$  generated by $A$, is to say,
\[
  \langle A\rangle=\left\{ \lambda_1a_1+\ldots +\lambda_n a_n \colon n\in\mathbb{N}\backslash\{0\}, \{a_1, \ldots, a_n\}\subseteq A \text{ and } \{\lambda_1, \ldots, \lambda_n\}\subseteq\mathbb{N}  \right\}.
\]

In Lemma $2.1$ of \cite{Libro} is shown that $\langle A\rangle$ is a numerical semigroup, if and only if, $\gcd(A)=1.$

If $M$ is a submonoid of $(\mathbb{N},+)$ and $M=\langle A\rangle,$ then it is said that $A$ is a system of generators of $M$. Furthermore, if $M\not = \langle B \rangle$ for all $B\subsetneq A$, then we will say that $A$ is a minimal system of generators of $M$. In the Corollary 2.8 of \cite{Libro}, it is shown that all submonoid of $(\mathbb{N},+)$ has a unique minimal system of generators and this system is finite.

We will denote by msg$(M)$ the minimal system of generators of $M$. The cardinal of msg$(M)$ is called the embedding dimension of $M$, denominated as $e(M)$.

The Frobenius problem for numerical semigroups (see \cite{Alfonsin}) consists in
searching for formulas to compute the Frobenius number and the genus of a numerical semigroup based on its minimal system of generators. The aforementioned problem was solved in \cite{Sylvester} for numerical semigroups with embedding dimension equals to two. At present, this problem is still open for numerical semigroups with embedding dimension greater or equal to three.

If $g\in\mathbb{N}$, we denote as $n_g=\#\{ S \text{ such that } S$ is a numerical semigroup with $g(S)=g\}$. The problem to determine $n_g$ has been thoroughly studied in the scientific literature (for example, research such as \cite{Blanco}, \cite{MJPAA}, \cite{MSF}, \cite{Elizalde}, \cite{Kaplan} and \cite{Zhao}).
Many of these publications are motivated for attempting to respond to the 	conjecture established in \cite{MSF}, which argues that $n_g\leq n_{g+1}$.

If $S$ is a numerical semigroups, let denote as $\mu(S)=\#\{x\in\text{msg}(S)$ such that $x > F(S)\}$. An internal numerical semigroup is a numerical semigroup $S$ with $\mu(S)\not = 0$.

We symbolise as $\mathscr{I}=\{S \text{ such that } S \text{ is an internal numerical semigroup}\}$. If $k\in\mathbb{N}$, then we denote as $\mathscr{I}(\text{gen}=k)=\{S\in\mathscr{I} \text{ such that } g(S)=k\}$.
  
In section \ref{sec:arbol}, we will show that if $g\in\mathbb{N}$, then
\[ n_{g+1}=\sum_{S\in\mathscr{I}(\text{gen}=g)} \mu(S).  \]
Consequently, to quantify $n_{g+1}$ is enough to compute $\mathscr{I}(\text{gen}=g)$.

Besides, in section \ref{sec:arbol}, we also conjecture that  $\#\mathscr{I}(\text{gen}=k)\leq \#\mathscr{I}(\text{gen}=k+1)$.

A numerical semigroup $S$ is a leaf if $\mu(S)=0$. Let denote by
$\mathscr{L}=\{S$ such that  $S$  is a leaf numerical semigroup$\}$ and   $\mathscr{L}($gen$=k)=\{S\in\mathscr{L}$  such that $g(S)=k\}$. Furthermore, in section \ref{sec:arbol}, we conjecture that  $\#\mathscr{L}($gen$=k)\leq \#\mathscr{L}($gen$=k+1)$ and  $\#\{\mathscr{L}(\text{gen}=k)\} \leq \#\{\mathscr{I}(\text{gen}=k)\}$ for $k\in\mathbb{N}$.

In section  \ref{sec:fixedgenus}, we will show that $\mathscr{I}$ is a Frobenius variety. This will allow us, by utilising the results of \cite{Variedades}, to provide an algorithm to compute all the elements of $\mathscr{I}(\text{gen}=k)$ for a given $k\in\mathbb{N}$.

If $F$ is a positive integer, then we denote as $\mathscr{I}($Frob$=F)=\{ S\in \mathscr{I}$  such that $F(S)=F\}$. In section \ref{sec:fixedFrobenius}, we will expose that $\mathscr{I}($Frob$=F)$ is a covariety. The results of \cite{Covariedades} will enable us to furnish an algorithm to calculate all the elements of  $\mathscr{I}(\text{Frob}=F)$.

If $m$ is an integer greater or equal to two, then we denote as $\mathscr{I}(\text{mul}=m)=\{ S\in\mathscr{I} \text{ such that } m(S)=m\}$. In section \ref{sec:fixedmultiplicity} we will show that $\mathscr{I}(\text{mul}=m)$ is a Frobenius pseudo-variety. By applying the results of \cite{Pseudo}, we can provide an algorithm able to compute all the elements of $\mathscr{I}(\text{mul}=m, \text{gen}=g)=\{S\in\mathscr{I} \text{ such that } m(S)=m \text{ and } g(S)=g\}$.

If $m$ is an integer greater or equal to two, and $F$ is an integer such that $m-1\leq F$ and $m\nmid F$, then we label as  $\mathscr{I}(\text{mul}=m, \text{Frob}=F)=\{S\in\mathscr{I} \text{ such that } m(S)=m \text{ and } F(S)=F\}$.  In section \ref{sec:fixedmulFrob} we will illustrate that  $\mathscr{I}(\text{mul}=m, \text{Frob}=F)$ is a ratio-covariety. The results of \cite{Ratio} will permit us to write an algorithm to find all the elements of  $\mathscr{I}(\text{mul}=m, \text{Frob}=F)$.

\section{The tree of numerical semigroups}
\label{sec:arbol}

A graph is a pair $G=(V, E)$ where $V$ is a non empty set and $E$ is a subset of $\{(u,v)\in V\times V \text{ such that } u\not = v\}$. The elements of $V$ and $E$ are called vertices and edges respectively.

A path (of length $n$) connecting vertices $u$ and $v$ of $G$ is a sequence of distinct edges of the form $(v_0, v_1)$, $(v_1, v_2), \ldots, (v_{n-1}, v_n)$ such that $u=v_0$ and $v=v_n$.

A graph $G$ is a tree if there exists a vertex $r$ (known as root of $G$) such that for all other vertex $v$ of $G$, there exists a unique path connecting $v$ and $r$.

It is said that a vertex $u$ is a child of vertex $v$ if $(u,v)\in E$. If a vertex $v$ does not have children then we will say that $v$ is a leaf (or external vertex). Otherwise, it is said that $v$ is an internal vertex.

The next result has an immediate proof.
\begin{lemma}\label{Lem:1}
  Let $S$ be a numerical semigroup and $x\in S$. Then the following statements are verified:
  \begin{enumerate}
  \item If $S\not =\mathbb{N}$, then $S\cup\{F(S)\}$ is also a numerical semigroup.
  \item $S\backslash \{x\}$ is a numerical semigroup if and only if $x\in$msg$(S)$.
  \end{enumerate}
\end{lemma}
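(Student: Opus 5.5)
We check each item directly from the definitions: a numerical semigroup is a submonoid of $(\mathbb{N},+)$ with finite complement in $\mathbb{N}$, and we use the characterisation of the minimal system of generators recalled in the introduction.

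For the first item, set $T=S\cup\{F(S)\}$. Since $S\neq\mathbb{N}$, $F(S)$ is a positive integer not in $S$. Then $0\in T$, and $\mathbb{N}\backslash T=(\mathbb{N}\backslash S)\backslash\{F(S)\}$ is finite. For closure under addition, let $a,b\in T$. If both lie in $S$, then $a+b\in S\subseteq T$. Otherwise one of them, say $a$, equals $F(S)$.
\begin{itemize}
\item If $b=0$, then $a+b=F(S)\in T$.
\item If $b>0$, then $a+b>F(S)$, and every integer greater than $F(S)$ lies in $S$ by maximality of $F(S)$. So $a+b\in S\subseteq T$.
\end{itemize}
Hence $T$ is a numerical semigroup.

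For the second item, first note that if $x=0$ then $S\backslash\{0\}$ is not a monoid. Also $0\notin\text{msg}(S)$, because removing $0$ from any generating set still generates the same monoid. So both sides of the equivalence are false for $x=0$, and we may assume $x>0$. The key observation is that, for $x\in S\backslash\{0\}$,
\[
x\in\text{msg}(S)\iff x\neq s+t \text{ for all } s,t\in S\backslash\{0\}.
\]
If this is not already available from \cite{Libro}, I would prove it as follows.
\begin{itemize}
\item The set $A=\{x\in S\backslash\{0\} : x\notin (S\backslash\{0\})+(S\backslash\{0\})\}$ generates $S$. Indeed, by strong induction every nonzero element of $S$ is either in $A$ or a sum of two strictly smaller nonzero elements of $S$.
\item Every element of $A$ lies in any generating set $B\subseteq S$. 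An element of $A$ cannot be written as a sum of two or more nonzero elements of $B$, so it must itself belong to $B$.
\item By the uniqueness of the minimal system of generators, $\text{msg}(S)=A$.
\end{itemize}

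With this in hand, the second item follows directly. The complement of $S\backslash\{x\}$ in $\mathbb{N}$ gains only one point, so it stays finite, and $0$ is still present. The only issue is closure under addition.
\begin{itemize}
\item Suppose $x\in\text{msg}(S)$ and $a,b\in S\backslash\{x\}$. Then $a+b\in S$. If $a+b=x$, minimality forces $a=0$ or $b=0$, so the other summand equals $x$, which is a contradiction. Thus $a+b\in S\backslash\{x\}$.
\item Conversely, suppose $x\notin\text{msg}(S)$. Then $x=s+t$ with $s,t\in S\backslash\{0\}$. Both satisfy $s,t<x$, so they lie in $S\backslash\{x\}$, while their sum $x$ does not. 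Hence $S\backslash\{x\}$ is not closed under addition.
\end{itemize}

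The only step with real content is the characterisation of $\text{msg}(S)$ as the nonzero elements of $S$ that are not sums of two nonzero elements of $S$. Everything else is routine bookkeeping with $F(S)$ and the finiteness of complements.
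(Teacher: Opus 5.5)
Your proof is correct, and it is the direct verification from the definitions that the paper has in mind when it says the lemma ``has an immediate proof'' (the paper gives no written argument). The one step with content is the description of $\text{msg}(S)$ as the nonzero elements of $S$ that are not a sum of two nonzero elements of $S$; this is standard, and your self-contained proof of it is sound.
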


Let $\mathscr{S}$ the set formed by all numerical semigroups. Then, we define the graph $G(\mathscr{S})$ in the following way: $\mathscr{S}$ is its vertices set and $(S,T)\in\mathscr{S}\times\mathscr{S}$ is an edge if and only if $T=S\cup\{F(S)\}$.

The next result is the Proposition 7.1 of \cite{Libro}.

\begin{proposition}\label{prop:tree}
  $G(\mathscr{S})$ is a tree with $\mathbb{N}$ as root. Besides, the set formed by the children of a vertex $S$ of the tree $G(\mathscr{S})$ are $\{S\backslash\{x\}$ such that $x\in$ msg$(S)$ and $x>F(S)\}$.
\end{proposition}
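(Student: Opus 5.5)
To prove Proposition~\ref{prop:tree}, I would first check that $G(\mathscr{S})$ is well defined with no loops, and then build the unique path to the root by iterating Lemma~\ref{Lem:1}(1).

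\textbf{Edges and loops.} For $S\neq\mathbb{N}$, Lemma~\ref{Lem:1}(1) says $S\cup\{F(S)\}$ is a numerical semigroup. It differs from $S$ because $F(S)\notin S$. So every $S\neq\mathbb{N}$ has exactly one outgoing edge, namely $(S,S\cup\{F(S)\})$, and these edges are never loops. The vertex $\mathbb{N}$ has no outgoing edge, since $F(\mathbb{N})=-1$ and $-1\notin\mathbb{N}$, so we do not form $\mathbb{N}\cup\{F(\mathbb{N})\}$ as an edge.

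\textbf{Existence of a path to $\mathbb{N}$.} Starting from $S$, define $S_0=S$ and $S_{i+1}=S_i\cup\{F(S_i)\}$ while $S_i\neq\mathbb{N}$. The genus drops by exactly one at each step: $g(S_{i+1})=g(S_i)-1$. Hence after $g(S)$ steps we reach $S_{g(S)}=\mathbb{N}$. The edges used are pairwise distinct because their first components have pairwise distinct genera. This gives a path from $S$ to $\mathbb{N}$.

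\textbf{Uniqueness of the path.} Any path from $S$ to $\mathbb{N}$ must follow outgoing edges, and each vertex has at most one outgoing edge. So the path is forced to be the one above.

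One point needs care: the paper's definition of path is written with edges oriented $(v_{i},v_{i+1})$. A path that traversed some edge backwards would therefore not be allowed, so the orientation convention settles uniqueness. I would remark on this explicitly.

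\textbf{Children.} This is the main step. I would show that $T$ is a child of $S$, meaning $T\cup\{F(T)\}=S$, if and only if $T=S\setminus\{x\}$ for some $x\in\operatorname{msg}(S)$ with $x>F(S)$.

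($\Rightarrow$) Suppose $T\cup\{F(T)\}=S$. Put $x=F(T)$. Then $x\in S$ and $T=S\setminus\{x\}$. Since $T$ is a numerical semigroup, Lemma~\ref{Lem:1}(2) gives $x\in\operatorname{msg}(S)$. Moreover, every integer greater than $x$ lies in $T\subseteq S$, and $x\in S$, so $F(S)<x$.

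($\Leftarrow$) Let $x\in\operatorname{msg}(S)$ with $x>F(S)$, and put $T=S\setminus\{x\}$. By Lemma~\ref{Lem:1}(2), $T$ is a numerical semigroup. Since $x>F(S)$, every integer greater than $x$ lies in $S$, and none of them equals $x$, so all of them lie in $T$. Also $x\notin T$. Hence $F(T)=x$, and therefore $T\cup\{F(T)\}=S$.

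The only subtle point in the whole argument is the computation $F(S\setminus\{x\})=x$, which uses the hypothesis $x>F(S)$. Everything else follows directly from Lemma~\ref{Lem:1}.
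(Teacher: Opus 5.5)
Your proof is correct. The paper gives no proof of its own and cites Proposition 7.1 of \cite{Libro}; your argument matches the standard proof of that result. The chain $S\subsetneq S\cup\{F(S)\}\subsetneq\cdots\subsetneq\mathbb{N}$ of length $g(S)$ gives the path, uniqueness follows because each vertex has at most one outgoing edge under the paper's directed notion of path, and Lemma~\ref{Lem:1}(2) together with $F(S\setminus\{x\})=x$ for $x>F(S)$ identifies the children.
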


A tree can be built recursively by starting at the root and connecting each vertex with its children with an edge. By applying Proposition \ref{prop:tree}, we can construct recursively the tree $G(\mathscr{S})$ (see Figure \ref{Fig:semigroupstree}):

\begin{figure}
\forestset{declare toks={elo}{}} 
\begin{center}
  \begin{forest}
    for tree = {edge={<-}}
  [{$\mathbb{N}=<1>$},
    [{$<2,3>$},  edge label={node[midway, right]{$1$}} 
       [{$<3,4,5>$}, edge label={node[midway, left]{$2$}} 
          [{$<4,5,6,7>$}, edge label={node[midway, above left]{$3$}}
             [{$ $}, edge=dotted]
             [{$ $}, edge=dotted]
             [{$ $}, edge=dotted]
             [{$ $}, edge=dotted]
          ]
          [{$<3,5,7>$}, edge label={node[midway, left]{$4$}}
             [{$ $}, edge=dotted]
             [{$ $}, edge=dotted]
          ]
          [{$<3,4>$}, edge label={node[midway, above]{$5$}} ]
       ]
       [{$<2,5>$}, edge label={node[midway, right]{$3$}}
          [{$ $}, no edge]
          [{$ $}, no edge]
          [{$<2,7>$}, edge label={node[midway, right]{$5$}}
             [{$ $}, no edge]
             [{$ $}, no edge]          
             [{$ $}, edge=dotted]
          ]
       ]
  ]  
  ]
\end{forest}
\end{center}
\caption{Portrayal of the tree $G(\mathscr{S})$.}
\label{Fig:semigroupstree}
\end{figure}

The number $x$ which appears on the edge \raisebox{-4.75ex}{\begin{forest}     for tree = {edge={<-}} [{$P$}, [{$Q$},  edge label={node[midway, right]{$x$}}] ] \end{forest} } indicates that $Q=P\backslash\{x\}.$ Furthermore, note that $F(Q)=x$. Moreover, observe also that $\langle 3, 4\rangle$ is a leaf node and $\langle 1\rangle$,  $\langle 2, 3\rangle$,  $\langle 3, 4, 5\rangle$,  $\langle 2, 5\rangle$,  $\langle 4, 5, 6, 7\rangle$,  $\langle 3, 5, 7\rangle$ and  $\langle 2, 7\rangle$ are internal vertexes of the tree $G(\mathscr{S})$.

If $G=(V, E)$ is a rooted tree and $v\in V$, we can define the depth of $v$, denoted by $d(v)$, as the length of the unique path from $v$ to the root. By definition, we will say that the depth of the root is $0$. If $k\in\mathbb{N}$, then we express as $\mathscr{N}(G(\mathscr{S}), k)=\{v\in V \text{ such that } d(v)=k\}.$

Then $\mathscr{N}$($G(\mathscr{S})$,$0)$=$\{\mathbb{N}\},  \mathscr{N}(G(\mathscr{S}),1)=\{\langle 2, 3\rangle\}, \mathscr{N}(G(\mathscr{S}),2)=\{\langle 3, 4,  5\rangle$,\newline
$ \langle 2, 5\rangle\}$,  $\mathscr{N}(G(\mathscr{S}),3)=\{\langle 4, 5, 6, 7\rangle,\langle 3, 5, 7\rangle, \langle 3, 4\rangle,\langle 2, 7\rangle\}$, $\ldots$

It is straightforward to probe the following result.
\begin{proposition}\label{Prop:3}
  If $k\in\mathbb{N}$, then the following conditions hold:
  \begin{enumerate}
  \item $\mathscr{N}(G(\mathscr{S}), k)=\{ S\in\mathscr{S} \text{ such that } g(S)=k\}.$
   \item $\mathscr{N}(G(\mathscr{S}), k+1)=\{ S\in\mathscr{S}$ such that $S$ is a child of one element of $\mathscr{N}(G(\mathscr{S}), k)$ in the tree  $G(\mathscr{S})  \}.$
   \end{enumerate}
\end{proposition}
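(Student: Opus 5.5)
The plan is to prove item 1 by showing that the depth of a vertex $S$ equals its genus, and then to obtain item 2 as a consequence.

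\textbf{Depth equals genus.} The key observation is that the parent of $S \neq \mathbb{N}$ is $S \cup \{F(S)\}$, which is a numerical semigroup by Lemma~\ref{Lem:1}. Since $F(S) \notin S$, we have
\[
  g(S \cup \{F(S)\}) = g(S) - 1 .
\]
Let $S$ have genus $g$. Define the sequence $S_0 = S$ and $S_{i+1} = S_i \cup \{F(S_i)\}$ for as long as $S_i \neq \mathbb{N}$. The genus drops by exactly one at each step, so the sequence reaches $\mathbb{N}$ (genus $0$) after exactly $g$ steps. The $g$ pairs $(S_i, S_{i+1})$ are edges of $G(\mathscr{S})$ and they are distinct, because the $S_i$ have distinct genera. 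Hence they form a path of length $g$ from $S$ to the root.

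\textbf{Uniqueness of the path.} By Proposition~\ref{prop:tree}, $G(\mathscr{S})$ is a tree rooted at $\mathbb{N}$, so this path is the unique one. Therefore $d(S) = g(S)$, and $\mathscr{N}(G(\mathscr{S}),k)$ is exactly the set of numerical semigroups of genus $k$. This proves item 1.

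The one point to check is that the paper's paths are sequences of directed edges $(v_{i-1}, v_i)$. The path above goes from child to parent, which matches the edge orientation $(S, S \cup \{F(S)\})$. Moreover, every vertex other than $\mathbb{N}$ has exactly one outgoing edge, so any path starting at $S$ is forced to be this one. This gives uniqueness directly, without even needing to invoke Proposition~\ref{prop:tree}.

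\textbf{Item 2.} First take $T$ of genus $k+1$. Its parent $T \cup \{F(T)\}$ has genus $k$, so $T$ is a child of an element of $\mathscr{N}(G(\mathscr{S}),k)$. Conversely, let $T$ be a child of some $S$ of genus $k$. By Proposition~\ref{prop:tree}, $T = S \setminus \{x\}$ for some $x \in \operatorname{msg}(S)$ with $x > F(S)$, so $g(T) = k+1$. Applying item 1 to both directions gives the equality of sets.

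I expect no real obstacle here; the only care needed is the bookkeeping showing that each step changes the genus by exactly one.
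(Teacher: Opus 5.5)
Your proof is correct. The paper gives no argument beyond calling the result straightforward, and your verification is exactly the routine check it has in mind: the parent $S\cup\{F(S)\}$ has genus $g(S)-1$, so the forced path to $\mathbb{N}$ has length exactly $g(S)$, while each child $S\setminus\{x\}$ has genus $g(S)+1$.
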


As consequence of Proposition \ref{Prop:3} we get the next result

\begin{corollary}\label{Col:4}
\[\#\{ S\in\mathscr{S} \text{ such that } g(S)=k+1\} = \sum_{S\in\mathscr{I}(\text{gen}=k)} \mu(S).  \]
\end{corollary}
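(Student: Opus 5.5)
The plan is to count the semigroups of genus $k+1$ by sorting them according to their parent in the tree $G(\mathscr{S})$. By Proposition \ref{Prop:3}(1), the left-hand side is $\#\mathscr{N}(G(\mathscr{S}),k+1)$. By Proposition \ref{Prop:3}(2), every element of $\mathscr{N}(G(\mathscr{S}),k+1)$ is a child of some element of $\mathscr{N}(G(\mathscr{S}),k)=\{S\in\mathscr{S}\colon g(S)=k\}$. Hence
\[
\mathscr{N}(G(\mathscr{S}),k+1)=\bigcup_{S\in\mathscr{S},\ g(S)=k}\{\text{children of } S\}.
\]

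Next I will check that this union is disjoint. A vertex $T\neq\mathbb{N}$ has exactly one parent, namely $T\cup\{F(T)\}$, because the edges of $G(\mathscr{S})$ are the pairs $(T,T\cup\{F(T)\})$. So the cardinality of the union is the sum, over all $S$ with $g(S)=k$, of the number of children of $S$.

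Then I will count the children of a fixed $S$ using Proposition \ref{prop:tree}. The children are the sets $S\backslash\{x\}$ with $x\in\mathrm{msg}(S)$ and $x>F(S)$. Distinct such $x$ give distinct sets $S\backslash\{x\}$, so $S$ has exactly $\mu(S)$ children.

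Finally, the semigroups with $\mu(S)=0$ add nothing to the sum, so the sum may be restricted to those with $\mu(S)\neq0$. These are exactly the elements of $\mathscr{I}(\text{gen}=k)$, which gives the stated formula.

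I expect no real obstacle. The only point needing care is the disjointness of the union, and that follows directly from the uniqueness of the parent.
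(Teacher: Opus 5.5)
Your proof is correct and takes essentially the same approach as the paper, which states this corollary as an immediate consequence of Proposition \ref{Prop:3} together with the description of children in Proposition \ref{prop:tree}. Your argument supplies exactly the details the paper leaves implicit: you partition the genus-$(k+1)$ semigroups by their unique parent $T\cup\{F(T)\}$, note that each $S$ has exactly $\mu(S)$ children, and observe that leaves contribute nothing to the sum.
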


\begin{example}
From the Figure \ref{Fig:semigroupstree}, we know that $\mathscr{I}($gen$=3)=\{\langle 4, 5, 6, 7\rangle,\langle 3, 5, 7\rangle,$ \newline $\langle 2, 7\rangle\}$. A simple calculation shows that $\mu(\langle 4, 5, 6, 7\rangle)=4$, $\mu(\langle 3, 5, 7\rangle)=2$ and  $\mu(\langle 2, 7\rangle)=1$. By applying Corollary \ref{Col:4} yields that $\#\{S\in\mathscr{S}$ such that $g(S)=4\}=4+2+1=7$.
\end{example}

From the observation of the Figure \ref{Fig:semigroupstree}, we can establish the following conjectures:

\begin{conjecture}\label{Conj:6}
If $k\in\mathbb{N}$, then $\#\{\mathscr{I}(\text{gen}=k)\} \leq \#\{\mathscr{I}(\text{gen}=k+1)\}$.
\end{conjecture}

\begin{conjecture}\label{Conj:7}
If $k\in\mathbb{N}$, then $\#\{\mathscr{L}(\text{gen}=k)\} \leq \#\{\mathscr{L}(\text{gen}=k+1)\}$.
\end{conjecture}

\begin{conjecture}\label{Conj:8}
If $k\in\mathbb{N}$, then $\#\{\mathscr{L}(\text{gen}=k)\} \leq \#\{\mathscr{I}(\text{gen}=k)\}$.
\end{conjecture}

To finish this section, let us observe that $\{S\in\mathscr{S} \text{ such that } g(S)=k+1\}= \{S\in\mathscr{S} \text{ such that } S \text{ is a child of one element of } \mathscr{I}(\text{gen}=k)\}$.

\section{Internal numerical semigroups with fixed genus}
\label{sec:fixedgenus}

A Frobenius variety is a non-empty family $\mathscr{V}$ of numerical semigroups verifying the next conditions:
\begin{enumerate}
\item If $\{S, T\}\subseteq \mathscr{V}$, then $S\cap T\in\mathscr{V}$.
\item If $S\in\mathscr{V}$ and $S\not = \mathbb{N}$, then $S\cup\{F(S)\}\in\mathscr{V}$.
\end{enumerate}

Next result is well known and  not complicated to proof.
\begin{lemma}\label{Lem:9}
  If $S$ and $T$ are numerical semigroups, then $S\cap T$ is also a numerical semigroup and $F(S\cap T)=\max\{F(S),F(T)\}$.
\end{lemma}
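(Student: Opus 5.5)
We verify the three defining properties of a numerical semigroup for $S\cap T$ directly, and then compute the Frobenius number by comparing complements.

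\emph{Submonoid.} Since $0\in S$ and $0\in T$, we have $0\in S\cap T$. If $a,b\in S\cap T$, then $a+b\in S$ because $S$ is closed under addition, and likewise $a+b\in T$. Hence $a+b\in S\cap T$, so $S\cap T$ is a submonoid of $(\mathbb{N},+)$.

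\emph{Finite complement.} The key observation is De Morgan's law:
\[
\mathbb{N}\backslash(S\cap T)=(\mathbb{N}\backslash S)\cup(\mathbb{N}\backslash T).
\]
The right-hand side is a union of two finite sets, so it is finite. Therefore $S\cap T$ is a numerical semigroup.

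\emph{Frobenius number.} The same identity holds with $\mathbb{Z}$ in place of $\mathbb{N}$:
\[
\mathbb{Z}\backslash(S\cap T)=(\mathbb{Z}\backslash S)\cup(\mathbb{Z}\backslash T).
\]
Each of these sets is bounded above and nonempty, since $-1$ lies in all of them. The maximum of a union of two sets with maxima is the larger of the two maxima. Hence
\[
F(S\cap T)=\max\{F(S),F(T)\}.
\]
This also covers the case $S=\mathbb{N}$, where $F(\mathbb{N})=-1$ under the paper's definition $F(S)=\max(\mathbb{Z}\backslash S)$.

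There is no real obstacle here. The only point needing care is to work with complements in $\mathbb{Z}$ rather than $\mathbb{N}$, so that the formula remains valid when one of the semigroups is $\mathbb{N}$.
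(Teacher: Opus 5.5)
Your proof is correct. The paper gives no proof of this lemma, only calling it well known and easy, and your direct verification (the submonoid check, then De Morgan on complements taken in $\mathbb{Z}$ so that the case $F(\mathbb{N})=-1$ is covered) is exactly the routine argument being left to the reader.
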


\begin{proposition}\label{Prop:10}
  $\mathscr{I}$ is a Frobenius variety.
\end{proposition}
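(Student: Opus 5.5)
The plan is to verify directly the two axioms of a Frobenius variety for $\mathscr{I}$, together with non-emptiness. Throughout I use only the definition: $S\in\mathscr{I}$ means that some $x\in\text{msg}(S)$ satisfies $x>F(S)$. I also use the elementary fact that for $x\in S\backslash\{0\}$, $x\in\text{msg}(S)$ if and only if $x$ is not a sum of two nonzero elements of $S$.

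\emph{Non-emptiness.} We have $\mathbb{N}\in\mathscr{I}$, since $\text{msg}(\mathbb{N})=\{1\}$ and $F(\mathbb{N})=-1<1$. Equivalently, $\mathbb{N}$ is the root of $G(\mathscr{S})$ and it has the child $\langle 2,3\rangle$.

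\emph{Intersections.} Let $S,T\in\mathscr{I}$. By Lemma \ref{Lem:9}, $S\cap T$ is a numerical semigroup and $F(S\cap T)=\max\{F(S),F(T)\}$. By symmetry I may assume $F(S)\geq F(T)$. Choose $x\in\text{msg}(S)$ with $x>F(S)$.
\begin{itemize}
\item Since $x>F(S)\geq F(T)$, we get $x\in T$, and hence $x\in S\cap T$.
\item Suppose $x=a+b$ with $a,b\in (S\cap T)\backslash\{0\}$. Then $a,b\in S\backslash\{0\}$, which contradicts $x\in\text{msg}(S)$.
\end{itemize}
So $x\in\text{msg}(S\cap T)$ and $x>F(S)=F(S\cap T)$. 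Therefore $S\cap T\in\mathscr{I}$. This uses only the internality of the semigroup with the larger Frobenius number.

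\emph{Adding the Frobenius number.} Let $S\in\mathscr{I}$ with $S\neq\mathbb{N}$, and put $T=S\cup\{F(S)\}$. This is a numerical semigroup by Lemma \ref{Lem:1}, and $F(T)<F(S)$. My first idea was to reuse a generator $x>F(S)$ of $S$, but it can fail to be minimal in $T$ because $x$ may equal $F(S)+b$. The better choice is $F(S)$ itself, and I claim $F(S)\in\text{msg}(T)$. Suppose $F(S)=a+b$ with $a,b\in T\backslash\{0\}$. Then $a,b<F(S)$, so $a,b\in S$, and hence $F(S)\in S$, a contradiction. Thus $F(S)$ is a minimal generator of $T$ with $F(S)>F(T)$, so $T\in\mathscr{I}$. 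This also matches the tree picture: $S$ is a child of $T$ in $G(\mathscr{S})$, so $T$ is not a leaf.

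None of the steps is a serious obstacle. The only point requiring care is the second axiom: one must pick $F(S)$ rather than the generator witnessing internality of $S$.
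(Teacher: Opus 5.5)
Your proof is correct and follows essentially the paper's argument. For intersections you take a minimal generator lying above the Frobenius number of the semigroup with the larger Frobenius number, and for the second axiom you use $F(S)$ itself as a minimal generator of $S\cup\{F(S)\}$. You also spell out the justifications the paper calls ``clear'' and check non-emptiness via $\mathbb{N}\in\mathscr{I}$, which the paper leaves implicit.
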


\begin{proof}
  If $\{S,T\}\subseteq\mathscr{I}$ then, by using Lemma \ref{Lem:9}, we get that $S\cap T$ is a numerical semigroup and $F(S\cap T)=\max \{F(S),F(T)\}$. We can assume without loss of generality that $F(S\cap T)=F(T)$. It is clear that if $x\in\text{msg}(T)$ and $x>F(T)$ then $x\in\text{msg}(S\cap T)$ and $x>F(S\cap T)$. As $\mu(T)\not = 0$ then $\mu(S\cap T)\not = 0$ and hence $S\cap T\in\mathscr{I}$.
  
  If $S\in\mathscr{I}$ and $S\not = \mathbb{N}$, then by using the Lemma \ref{Lem:1}, we know that $S\cup\{F(S)\}$ is a numerical semigroup. It is clear that $F(S)\in\text{msg}(S\cup\{F(S)\})$ and $F(S)>F(S\cup\{F(S)\})$. Therefore $\mu(S\cup\{F(S)\})\not = 0$ and so $S\cup\{F(S)\}\in\mathscr{I}$.
\end{proof}

Let define the graph $G(\mathscr{I})$ as follows: $\mathscr{I}$ is the set of vertexes and $(S,T)\in\mathscr{I}\times\mathscr{I}$ is a edge if and only if $T=S\cup\{F(S)\}$.

By applying [the Proposition 24 and the Theorem 27 of \cite{Variedades}] we obtain the next result.

\begin{theorem}
  The graph $G(\mathscr{I})$ is a tree and $\mathbb{N}$ is its root. Besides, the set formed with the children of a vertex $S$ of the tree $G(\mathscr{I})$ is $\{S\backslash\{x\} \text{ such that } x\in\text{msg}(S),\; x>F(S) \text{ and } S\backslash\{x\}\in\mathscr{I}\}$. 
\end{theorem}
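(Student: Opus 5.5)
The plan is to deduce the statement directly from Proposition \ref{Prop:10} and the general theory of Frobenius varieties, while checking that every step can also be done by hand from Proposition \ref{prop:tree}. That way the argument does not rest on an unexplained citation.

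First, $G(\mathscr{I})$ is well defined and $\mathbb{N}$ belongs to it. Since $\mathscr{I}$ is non-empty, it contains some $S$. Applying condition 2 of the Frobenius variety definition repeatedly gives the chain
\[
S\subsetneq S\cup\{F(S)\}\subsetneq\cdots .
\]
By Lemma \ref{Lem:1} each member of this chain is a numerical semigroup. The genus drops by one at each step, so the chain reaches $\mathbb{N}$ after $g(S)$ steps. By Proposition \ref{Prop:10} every member lies in $\mathscr{I}$, so in particular $\mathbb{N}\in\mathscr{I}$. (Directly: $\mathrm{msg}(\mathbb{N})=\{1\}$ and $F(\mathbb{N})=-1<1$, so $\mu(\mathbb{N})=1$.)

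The same chain gives a path in $G(\mathscr{I})$ from $S$ to $\mathbb{N}$. This path is unique because each vertex $S\neq\mathbb{N}$ has exactly one outgoing edge, namely $(S,S\cup\{F(S)\})$, and $\mathbb{N}$ has none. So $G(\mathscr{I})$ is a tree rooted at $\mathbb{N}$. Equivalently, $G(\mathscr{I})$ is the subgraph of $G(\mathscr{S})$ induced on $\mathscr{I}$, and $\mathscr{I}$ is closed under taking parents.

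For the children, let $S\in\mathscr{I}$. A vertex $T\in\mathscr{I}$ is a child of $S$ in $G(\mathscr{I})$ exactly when $T\cup\{F(T)\}=S$. This is the same as $T$ being a child of $S$ in $G(\mathscr{S})$ with $T\in\mathscr{I}$. By Proposition \ref{prop:tree}, such $T$ are precisely the sets $S\backslash\{x\}$ with $x\in\mathrm{msg}(S)$, $x>F(S)$, and $S\backslash\{x\}\in\mathscr{I}$, which is the description in the statement. This matches the cited Proposition 24 and Theorem 27 of \cite{Variedades}.

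The only point needing care is the uniqueness of the path. The paper's definition of a path uses directed edges $(v_i,v_{i+1})$, so uniqueness follows from each vertex having out-degree at most one together with the genus strictly decreasing along edges, which rules out cycles. I expect no real obstacle beyond this bookkeeping.
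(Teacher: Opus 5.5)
Your proof is correct and follows the paper's route. The paper gets the theorem from Proposition~\ref{Prop:10} plus Proposition~24 and Theorem~27 of \cite{Variedades}, and your argument is that general Frobenius-variety argument specialised to $\mathscr{I}$: closure under $S\mapsto S\cup\{F(S)\}$ makes $G(\mathscr{I})$ the subtree of $G(\mathscr{S})$ induced on $\mathscr{I}$, and the children are then read off from Proposition~\ref{prop:tree}. Your by-hand version also shows that only condition~2 of the Frobenius-variety definition is needed here, not closure under intersections.
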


The next lemma can be deduced straightforwardly from Corollary 18 of \cite{frases}.

\begin{lemma}
  Let $S$ be a numerical semigroup and $x\in\text{msg}(S)$ such that $x>F(S)$. Then the following conditions are satisfied:
  \begin{enumerate}
  \item If $x=\text{m}(S)$, then $\text{msg}(S\backslash\{x\})=\{x+1, x+2,\ldots, 2x+1\}$.
  \item If $x\not =\text{m}(S)$, then
  \begin{equation*} \text{msg}(S\backslash\{x\}) = \begin{cases}\text{msg}(S)\backslash\{x\} & \text{if } x+m(S)-y\in S \\
      & \text{for some }\\
      &y\in \text{msg}(S)\backslash\{x,m(S)\} ,\\
        (\text{msg}(S)\backslash\{x\})\cup\{x+m(S)\}  & \text{in another case.} \end{cases}
      \end{equation*}
  \end{enumerate}
\end{lemma}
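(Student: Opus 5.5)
The plan is to argue directly from the definitions, writing $T=S\backslash\{x\}$ and $m=\text{m}(S)$. Since $x>F(S)$ we have $F(T)=x$, so every integer greater than $x$ lies in $T$. I will use the characterization that $\text{msg}(T)$ consists of the nonzero elements of $T$ that are not a sum of two nonzero elements of $T$.

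\emph{Part 1.} If $x=m$, then $x>F(S)$ forces $S=\{0\}\cup\{n\in\mathbb{N}\colon n\geq m\}$, and hence $T=\{0\}\cup\{n\colon n\geq m+1\}$. The elements $m+1,\ldots,2m+1$ are not sums of two nonzero elements of $T$, because such sums are at least $2m+2$. Every $n\geq 2m+2$ equals $(m+1)+(n-m-1)$ with $n-m-1\geq m+1$. This gives $\text{msg}(T)=\{m+1,\ldots,2m+1\}$.

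\emph{Part 2, first inclusion.} Assume $x\neq m$, so $m<x$ and $m\in T$. If $a\in\text{msg}(S)\backslash\{x\}$, then $a$ is not a sum of two nonzero elements of $S$, and a fortiori not of $T$. Hence $\text{msg}(S)\backslash\{x\}\subseteq\text{msg}(T)$.

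\emph{Part 2, reverse inclusion.} Let $t\in\text{msg}(T)\backslash\text{msg}(S)$. Then $t=s_1+s_2$ with $s_1,s_2\in S\backslash\{0\}$, and one summand must be $x$, since otherwise this would be a decomposition in $T$. So $t=x+s$ with $s\in S\backslash\{0\}$. Suppose $s\neq m$. Then $s>m$ (this includes the case $s=x>m$), so $t-m=x+(s-m)>x$, hence $t-m\in T$. Then $t=m+(t-m)$ is a decomposition in $T$, which is a contradiction. Thus $t=x+m$, and so $\text{msg}(S)\backslash\{x\}\subseteq\text{msg}(T)\subseteq(\text{msg}(S)\backslash\{x\})\cup\{x+m\}$. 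Note also that $x+m\in T$ and $x+m\notin\text{msg}(S)$.

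\emph{Part 2, the criterion.} It remains to show that $x+m\notin\text{msg}(T)$ if and only if $x+m-y\in S$ for some $y\in\text{msg}(S)\backslash\{x,m\}$. I expect this to be the main obstacle.

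For the "if" direction, take such a $y$. Then $y\in T$. Also $x+m-y\neq x$ because $y\neq m$, and $x+m-y\neq 0$ because $x+m$ is not a minimal generator of $S$. So $x+m=y+(x+m-y)$ is a decomposition in $T$.

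For the "only if" direction, write $x+m=a+b$ with $a,b\in T\backslash\{0\}$.
\begin{enumerate}
\item If some $y\in\text{msg}(S)\backslash\{x,m\}$ satisfies $a-y\in S$, then $x+m-y=(a-y)+b\in S$ and we are done. The same holds with the roles of $a$ and $b$ swapped.
\item Otherwise every factorization of $a$ and of $b$ in terms of $\text{msg}(S)$ uses only $x$ and $m$.
\begin{enumerate}
\item If the factorization of $a$ uses $x$, then $a\neq x$ (since $a\in T$) forces $a=x+c$ with $c\in S\backslash\{0\}$. This gives $a\geq x+m$, contradicting $b>0$. The same argument applies to $b$.
\item Hence $a=km$ and $b=lm$ with $k,l\geq1$, so $x=(k+l-1)m$ with $k+l-1\geq 1$. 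If $k+l-1=1$ this gives $x=m$, which is excluded. If $k+l-1\geq 2$, then $x$ is a sum of two nonzero elements of $S$, contradicting $x\in\text{msg}(S)$.
\end{enumerate}
\end{enumerate}
This establishes the criterion and completes Part 2.
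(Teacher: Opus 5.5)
Your proof is correct and complete, and it takes a different route from the paper. The paper gives no argument of its own for this lemma: it only says the lemma can be deduced from Corollary 18 of \cite{frases}, so the work is delegated to an external result.

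You instead prove it from scratch. You use only the description of $\text{msg}(T)$ as the nonzero elements of $T$ that are not sums of two nonzero elements of $T$. The argument runs in three steps:
\begin{enumerate}
\item The inclusion $\text{msg}(S)\backslash\{x\}\subseteq\text{msg}(T)$ is immediate.
\item In Part 2 the hypothesis $x>F(S)$ is needed at exactly one point. Any new generator $t=x+s$ with $s\neq m$ satisfies $t-m>x$, hence $t-m\in T$, so $t=m+(t-m)$ is decomposable in $T$. This leaves $x+m$ as the only possible new generator.
\item The criterion for whether $x+m$ survives is settled by your factorization argument. The case in which $a$ and $b$ involve only $x$ and $m$ is ruled out by $x\in\text{msg}(S)$ and $x\neq m$.
\end{enumerate}

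The citation buys brevity. Your route buys transparency and self-containment: it shows precisely why removing a generator beyond the Frobenius number can create at most one new minimal generator. It also shows that the membership criterion for $x+m$ does not itself depend on $x>F(S)$.

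The only thing worth adding is a one-line remark before you invoke the characterization of $\text{msg}(T)$. Note that $T$ is a submonoid, indeed a numerical semigroup, by Lemma~\ref{Lem:1}, because $x\in\text{msg}(S)$.
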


\begin{lemma} It is hold that
  \[  \{g(S) \text{ such that } S\in\mathscr{I}\}=\mathbb{N}.  \]
\end{lemma}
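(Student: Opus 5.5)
The plan is to show that for every $k\in\mathbb{N}$ there is an internal numerical semigroup of genus $k$. The inclusion $\{g(S) \text{ such that } S\in\mathscr{I}\}\subseteq\mathbb{N}$ is trivial, so only the reverse inclusion needs work. I will use the explicit family
\[
S_k=\{0\}\cup\{x\in\mathbb{N} \text{ such that } x\geq k+1\}=\langle k+1, k+2,\ldots, 2k+1\rangle ,
\]
the ordinary numerical semigroup of multiplicity $k+1$. For $k=0$ this gives $S_0=\mathbb{N}=\langle 1\rangle$.

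First I check that $S_k$ is a numerical semigroup. It contains $0$, it is closed under addition (a sum of two nonzero elements is at least $2k+2$), and $\mathbb{N}\backslash S_k=\{1,\ldots,k\}$ is finite. Hence $g(S_k)=k$ and, for $k\geq 1$, $F(S_k)=k$.

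Next I determine the minimal generators. Every nonzero element of $S_k$ is at least $k+1$, so any element in the range $k+1,\ldots,2k+1$ cannot be written as a sum of two nonzero elements, since such a sum is at least $2k+2$. Every $x\geq 2k+2$ can be written as $(k+1)+(x-k-1)$ with $x-k-1\geq k+1$. Therefore $\text{msg}(S_k)=\{k+1,\ldots,2k+1\}$. In particular $k+1\in\text{msg}(S_k)$ and $k+1>F(S_k)$, so $\mu(S_k)=k+1\neq 0$ and $S_k\in\mathscr{I}$ with $g(S_k)=k$.

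The only point needing care is the degenerate case $k=0$. Here I note that $F(\mathbb{N})=\max(\mathbb{Z}\backslash\mathbb{N})=-1$ and $\text{msg}(\mathbb{N})=\{1\}$ with $1>-1$, so $\mu(\mathbb{N})=1$. This agrees with the paper listing $\langle 1\rangle$ as an internal vertex.

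Alternatively, one could argue through the tree. By Proposition \ref{Prop:3}, the children of a genus-$k$ semigroup have genus $k+1$, and $\mathscr{I}$ is closed under $S\mapsto S\cup\{F(S)\}$. However, the explicit family above is more direct, so there is no real obstacle.
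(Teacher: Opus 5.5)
Your proof is correct and uses the same witness as the paper, the semigroup $\{0,k+1,\to\}$ of genus $k$. The paper states its membership in $\mathscr{I}$ as an observation; you supply the verification, namely that its minimal generators $k+1,\ldots,2k+1$ all exceed $F=k$, and that the case $k=0$ works because $F(\mathbb{N})=-1$.
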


\begin{proof}
  It is enough to observe that $S=\{0, g+1,\to\}\in\mathscr{I}$ and $g(S)=g$.
\end{proof}

At this point, we can provide an algorithm to compute all the internal numerical semigroups with a given genus $g$.

\begin{algorithm}  \label{alg:fixedgenus}
  Computation of $\mathscr{I}(gen=g)$, i.e., all internal numerical semigroups with fixed genus g. \\ 
\; \\
\textbf{Input:} A non negative integer $g$. \\
\textbf{Output:} $\mathscr{I}(\text{gen}=g)$. \\
\begin{enumerate}[(1)]
\item  $A=\{\mathbb{N}\}, \; i=0$. 
\item  \textbf{if} {$i=g$} \textbf{then return $A$}. 
\item  \textbf{end if} 
\item  \textbf{for all} {$S\in A$} \textbf{do} 
\item  \hspace{0.4cm} Compute $\alpha(S)=\{x\in$msg$(S)$ such that $x>F(S)$ and $\mu(S\backslash\{x\})\not = 0\}$. 
\item  \textbf{end for} 
\item  $A= \bigcup\limits_{S\in A} \{S\backslash\{x\}$ such that $x\in\alpha(S)\}$. 
\item  $i=i+1$. 
\item  \textbf{go to} 2.
\end{enumerate}  
\end{algorithm}

Let us see an example to show how this algorithm works.

\begin{example}
  We want to compute $\mathscr{I}($gen$=3)$ by applying the Algorithm \ref{alg:fixedgenus}:
  \begin{itemize}
  \item $A=\{\mathbb{N}\}, \; i=0$.
  \item $\alpha(\mathbb{N})=\{1\}$.
  \item $A=\{\langle 2, 3\rangle\},\; i=1$.
  \item  $\alpha(\langle 2, 3\rangle)=\{2, 3\}$.
  \item $A=\{\langle 3, 4, 5\rangle, \langle 2, 5\rangle\},\; i=2$.
  \item  $\alpha(\langle 3, 4, 5\rangle)=\{3, 4\}$ and $\alpha(\langle 2, 5\rangle)=\{5\}$.
  \item  $A=\{\langle 4, 5, 6, 7\rangle, \langle 3, 5, 7\rangle, \langle 2, 7\rangle\},\; i=3$.
  \end{itemize}
  The Algorithm \ref{alg:fixedgenus} returns $\mathscr{I}($gen$=3)=\{\langle 4, 5, 6, 7\rangle, \langle 3, 5, 7\rangle, \langle 2, 7\rangle\}$.
\end{example}

Performing computations by applying the Algorithm \ref{alg:fixedgenus}, we obtain the Table \ref{tab:fixedgenus}, where $ins_g$ denotes the number of internal numerical semigroups with genus $g$, $lns_g$ the number of leaf numerical semigroups with genus $g$ and $ns_g$ is the number of numerical semigroups with genus $g$, is to say, $ns_g=ins_g+lns_g$. 

\begin{table}[htb]
\makebox[1 \textwidth][c]{       
\resizebox{1.3 \textwidth}{!}{   
  
\begin{tabular}{|c c c c | c c c | c c c|}
\hline  
$g$ & $ins_g$ & $lns_g$ & $ns_g$ &  $ins_{g-1}+ins_{g-2}$ & $\frac{ins_{g-1}+ins_{g-2}}{ins_g}$ & $\frac{ins_{g}}{ins_{g-1}}$ &    $lns_{g-1}+lns_{g-2}$ & $\frac{lns_{g-1}+lns_{g-2}}{lns_g}$ & $\frac{lns_{g}}{lns_{g-1}}$ \\ \hline
0 &1     &0     &1     &      &        &        &      &        &   \\
1 &1     &0     &1     &      &        &  1     &      &        &  \\
2 &2     &0     &2     & 2    & 1      &   2    &0     &        &  \\
3 &3     &1     &4     & 3    & 1      & 1.5    &0     & 0      &  \\
4 &5     &2     &7     & 5    &  1     &1.666667&1     &  0.5   &  2 \\
5 &10    &2     &12    & 8    &  0.8   & 2      &3     &1.5     &  1 \\
6 &15    &8     &23    & 15   &  1     & 1.5    & 4    & 0.5    &  4 \\
7 &27    &12    &39    & 25   &0.925926&1.8     &10    &0.833333&  1.5  \\
8 &47    &20    &67    & 42   &0.893617&1.740741&20    & 1      &1.666667 \\
9 &81    &37    &118   & 74   &0.91358 &1.723404& 32   &0.864865&  1.85\\
10&132   &72    &204   & 128  &0.969697&1.62963 &57    &0.791667&1.945946 \\
11&233   &110   &343   & 213  &0.914163&1.765152&109   &0.990909&1.527778  \\
12&381   &211   &592   & 365  &0.958005&1.635193&182   &0.862559&1.918182  \\
13&651   &350   &1001  & 614  &0.943164&1.708661&321   &0.917143&1.658768   \\       
14&1103  &590   &1693  & 1032 &0.93563 &1.694316&561   &0.950847&1.685714   \\
15&1836  &1021  &2857  & 1754 &0.955338&1.664551&940   &0.920666&1.730508 \\
16&3064  &1742  &4806  & 2939 &0.959204&1.668845&1611  &0.924799&1.70617 \\
17&5144  &2901  &8045  &  4900&0.952566&1.678851&2763  &0.95243 &1.665327  \\
18&8540  &4927  &13467 & 8208 &0.961124&1.660187& 4643 &0.942358&1.69838  \\
19&14220 &8244  &22464 & 13684&0.962307&1.665105&7828  &0.949539&1.673229  \\
20&23646 &13750 &37396 &22760 &0.962531&1.662869&13171 &0.957891&1.66788 \\
21&39235 &22959 &62194 &37866 &0.965108&1.659266&21994 &0.957969&1.669745 \\
22&64890 &38356 &103246&62881 &0.96904 &1.65388 &36709 &0.95706 &1.67063\\
23&107403&63560 &170963&104125&0.969479&1.655155&61315 &0.964679&1.657107 \\
24&177349&105479&282828&172293&0.971491&1.651248&101916&0.966221&1.659519  \\
25&292551&174673&467224&284752&0.973341&1.649578&169039&0.967745&1.655998 \\
26&482133&288699&770832&469900&0.974627&1.648031&280152&0.970395&1.652797  \\
  \hline
\end{tabular}
} 
} 

\caption{\label{tab:fixedgenus}  Computational results on the number of internal numerical semigroups up to genus 26, where $ins_g$ denotes the number of internal numerical semigroups with genus $g$, $lns_g$ the number of leaf numerical semigroups with genus $g$ and $ns_g$ is the total number of numerical semigroups with genus $g$.}
\end{table}

The results depicted in Table \ref{tab:fixedgenus} supports conjectures \ref{Conj:6}, \ref{Conj:7} and \ref{Conj:8}. Moreover, it seems that the number of internal and leaf numerical semigroups have a Fibonacci-like property analogous to the property established in \cite{Bras-Amorós2008}.
Following these ideas, we have written the number of internal numerical semigroups ($ins_g$) of the given genus $g$, the Fibonacci-like-estimated value given by the sum of the number of internal numerical semigroups of the two previous genera, the value of the quotient $\frac{ins_{g-1}+ins_{g-2}}{ins_g}$, and the value of the quotient $\frac{ins_g}{ins_{g-1}}$ up to genus $26$. Besides, we have done the same for leaf numerical semigroups ($lns_{g})$.
The values of these quotients follow an analogous behaviour than the computations performed in \cite{Bras-Amorós2008}, so we could conjecture that  $\frac{ins_{g-1}+ins_{g-2}}{ins_g}$ and  $\frac{lns_{g-1}+lns_{g-2}}{lns_g}$ tend to $1$ when $g$ tends to infinity. On the other hand,  $\frac{ins_g}{ins_{g-1}}$ and  $\frac{lns_g}{lns_{g-1}}$ approach the golden ratio as g approaches infinity. So, we can write down analogous  conjectures (to those provided in \cite{Bras-Amorós2008}) for internal and leaf numerical semigroups as follows:
\begin{conjecture}\label{Conj:Fibonacci}
   The number of internal and leaf numerical semigroups have a Fibonacci-like property, it is to say:
\[ins_{g}\geq ins_{g-1} + ins_{g-2} \text{, for } g\geq 2,\]
and
\[lns_{g}\geq lns_{g-1} + lns_{g-2} \text{, for } g\geq 6. \]
Also, the asymptotic behaviour is
\begin{itemize}
\item $\lim_{g\to \infty}\frac{ins_{g-1}+ins_{g-2}}{ins_g}=1,$
\item $\lim_{g\to \infty}\frac{lns_{g-1}+lns_{g-2}}{lns_g}=1,$
\item $\lim_{g\to \infty}  \frac{ins_g}{ins_{g-1}}=\varphi$,
 \item $\lim_{g\to \infty}  \frac{lns_g}{lns_{g-1}}=\varphi$. 
 \end{itemize}
  where $\varphi$ is the golden ratio.
\end{conjecture}

\begin{remark}
 Conjecture \ref{Conj:Fibonacci} is stronger than conjectures \ref{Conj:6} and \ref{Conj:7}.
\end{remark}

\section{Internal numerical semigroups with fixed Frobenius number}
\label{sec:fixedFrobenius}

In this section $F$ will denote a positive integer, $\mathscr{I}(\text{Frob}=F)=\{S\in\mathscr{I}$ such that $F(S)=F\}$ and $\mathscr{L}(\text{Frob}=F)=\{S\in\mathscr{L}$ such that $F(S)=F\}$.

A covariety is a family $\mathscr{C}$ of numerical semigroups fulfilling the following conditions:
\begin{enumerate}
\item $\mathscr{C}$ has a minimum (respects to the order provided by the inclusion).
\item If $\{S, T\}\subseteq \mathscr{C}$ then $S\cap T\in\mathscr{C}$.
\item If $S\in\mathscr{C}$ and $S\not = \min(\mathscr{C})$, then $S\backslash \{m(S)\}\in\mathscr{C}$.
\end{enumerate}

\begin{proposition}\label{Prop:16}
$\mathscr{I}(\text{Frob}=F)$ is a covariety.
\end{proposition}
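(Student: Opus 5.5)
We have to check the three covariety axioms for $\mathscr{I}(\text{Frob}=F)$: it has a minimum, it is closed under intersections, and it is closed under removing the multiplicity from any non-minimal element.

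\textbf{Minimum.} The candidate is $\Delta(F)=\{0,F+1,\to\}$. Its minimal generators are $F+1,\dots,2F+1$, all greater than $F$. Hence $\mu(\Delta(F))=F+1\neq 0$ and $\Delta(F)\in\mathscr{I}(\text{Frob}=F)$. Any numerical semigroup $S$ with $F(S)=F$ contains every integer greater than $F$, so $\Delta(F)\subseteq S$. Therefore $\Delta(F)$ is the minimum.

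\textbf{Intersections.} Take $S,T\in\mathscr{I}(\text{Frob}=F)$. By Lemma \ref{Lem:9}, $S\cap T$ is a numerical semigroup with $F(S\cap T)=\max\{F,F\}=F$. The argument of Proposition \ref{Prop:10} then applies. Let $x\in\text{msg}(T)$ with $x>F$. If $x=a+b$ with $a,b\in (S\cap T)\setminus\{0\}$, then $a,b\in T\setminus\{0\}$, contradicting minimality of $x$ in $T$. So $x\in\text{msg}(S\cap T)$ with $x>F(S\cap T)$, giving $\mu(S\cap T)\neq 0$.

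\textbf{Removing the multiplicity.} Let $S\in\mathscr{I}(\text{Frob}=F)$ with $S\neq\Delta(F)$. Then $m=m(S)<F$: if $m>F$ we would have $S=\Delta(F)$, and $m\neq F$ because $F\notin S$. The multiplicity is always a minimal generator, so by Lemma \ref{Lem:1}, $S'=S\setminus\{m\}$ is a numerical semigroup. Every integer greater than $F$ is still in $S'$ and $F\notin S'$, so $F(S')=F$ since $m<F$.

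It remains to show $\mu(S')\neq 0$, and this is the delicate step, because removing an element can create new minimal generators and alter old ones. Pick $x\in\text{msg}(S)$ with $x>F$. Suppose $x=a+b$ with $a,b\in S'\setminus\{0\}$. Since $S'\subseteq S$, this is a decomposition of $x$ in $S\setminus\{0\}$, contradicting $x\in\text{msg}(S)$. Also $x\neq m$ because $x>F>m$, so $x\in S'$. Hence $x\in\text{msg}(S')$, $x>F(S')$, and $S'\in\mathscr{I}(\text{Frob}=F)$. The general principle is that a minimal generator of $S$ other than the removed element remains a minimal generator of the smaller semigroup. The only point needing care is that $F(S')$ does not change, which is exactly why $m<F$ is required.
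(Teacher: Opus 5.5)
Your proof is correct and follows the paper's proof step by step. It takes $\Delta(F)$ as the minimum, gets closure under intersections from Lemma 9 together with the argument of Proposition 10, and for $S\neq\Delta(F)$ uses $m(S)<F$ and shows that any $x\in\text{msg}(S)$ with $x>F$ remains a minimal generator of $S\setminus\{m(S)\}$. You simply spell out details the paper calls clear, and the one step you leave tacit is immediate: $x>F=F(S)$ forces $x\in S$, hence $x\in S\cap T$.
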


\begin{proof}
It is clear that $\Delta(F)=\{0, F+1, \to\}$ is the minimum of $\mathscr{I}(\text{Frob}=F)$.

By applying the Lemma \ref{Lem:9} and Proposition \ref{Prop:10} we can straightforward deduce that if $\{S, T\}\subseteq \mathscr{I}(\text{Frob}=F)$, then $S\cap T\in \mathscr{I}(\text{Frob}=F)$.

Let $S$ be a numerical semigroup verifying that $S\in \mathscr{I}(\text{Frob}=F)$ and $S\not = \Delta(F)$. Then $m(S)<F$ and using the Lemma \ref{Lem:1} we get  that $S\backslash \{m(S)\}$ is a numerical semigroup with Frobenius number $F$. If $x\in\text{msg}(S)$ and $x>F$, then it is clear that $x\in\text{msg}(S\backslash\{m(S)\})$ and $x>F(S\backslash\{m(S)\})$. Therefore $\mu(S\backslash\{m(S)\})\not = 0$ and as a result $S\backslash\{m(S)\}\in \mathscr{I}(\text{Frob}=F)$.
\end{proof}

Let define the graph $G(\mathscr{I}(\text{Frob}=F))$ in the following way: $\mathscr{I}(\text{Frob}=F)$ is its set of vertexes and $(S,T)\in \mathscr{I}(\text{Frob}=F)\times \mathscr{I}(\text{Frob}=F)$ is a edge if an only if $T=S\backslash\{m(S)\}$.

By applying the previous Proposition \ref{Prop:16} and [the Proposition 2.3 from  \cite{Covariedades}] the following result can be obtained.

\begin{proposition}
 $G(\mathscr{I}(\text{Frob}=F))$ is a tree and $\Delta(F)$ is its root.
\end{proposition}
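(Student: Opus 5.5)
The plan is to invoke the general fact quoted from \cite{Covariedades} (Proposition 2.3), which asserts that for any covariety $\mathscr{C}$ the graph whose vertices are the elements of $\mathscr{C}$, with an edge $(S,T)$ exactly when $T=S\backslash\{m(S)\}$, is a tree rooted at $\min(\mathscr{C})$. Since Proposition \ref{Prop:16} shows that $\mathscr{I}(\text{Frob}=F)$ is a covariety with minimum $\Delta(F)=\{0,F+1,\to\}$, the statement follows at once. For completeness, I would also sketch the direct argument so that the proof does not rest entirely on the citation.

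The direct argument runs as follows. First, the edges are well defined and stay in the family: by condition 3 of a covariety, every $S\in\mathscr{I}(\text{Frob}=F)$ with $S\neq\Delta(F)$ satisfies $S\backslash\{m(S)\}\in\mathscr{I}(\text{Frob}=F)$, so each non-root vertex has exactly one outgoing edge. The root has none, because $m(\Delta(F))=F+1$ and removing it would change the Frobenius number. Second, starting from $S$, build the sequence $S_0=S$ and $S_{i+1}=S_i\backslash\{m(S_i)\}$, continuing while $S_i\neq\Delta(F)$. Each step removes one element of $S\cap\{1,\dots,F-1\}$, which is a finite set. Hence the sequence terminates, and it can only stop at $\Delta(F)$, the unique vertex without an outgoing edge. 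This yields a path from $S$ to $\Delta(F)$; its edges are distinct because the cardinality $\#(S_i\cap\{0,\dots,F\})$ strictly decreases along it.

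Third, I would prove uniqueness. Any path leaving $S$ must use an edge $(S,T)$, and since each vertex has out-degree at most one, $T$ is forced. By induction on $\#(S\cap\{1,\dots,F-1\})$, the path is therefore unique. The only point needing care is that a path in the paper's sense is a sequence of edges oriented from $v_0$ to $v_n$. Under that orientation, uniqueness reduces to the out-degree argument and there is no real obstacle. The main subtlety is simply confirming that the root is the only sink, which follows because removing $m(\Delta(F))=F+1$ changes the Frobenius number.
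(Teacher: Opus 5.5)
Your proposal is correct and takes the same route as the paper, which obtains the statement directly from Proposition \ref{Prop:16} together with Proposition 2.3 of \cite{Covariedades}. Your supplementary direct argument is sound: each non-root vertex has out-degree exactly one, $\Delta(F)$ is the only sink, and the number of small elements strictly decreases along the path. It simply makes explicit what the cited proposition provides.
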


Now, our goal is to study the children of an arbitrary vertex of the tree $G(\mathscr{I}(\text{Frob}=F))$, so we introduce the next notion.

An integer number $x$ is called a special gap of a numerical semigroup $S$ if $x\not\in S$ and $S\cup\{x\}$ is a numerical semigroup. Let denote by $SG(S)$ to the set containing all the special gaps of $S$.

By using the previous Proposition \ref{Prop:16} and the [Proposition 2.4 from  \cite{Covariedades}] one can get the next result.

\begin{proposition}
  If $S\in \mathscr{I}(\text{Frob}=F)$, then the set established by all the children of $S$ in the tree  $G(\mathscr{I}(\text{Frob}=F))$ is
  \[ \{S\cup\{x\} \text{ such that } x\in SG(S),\; x<m(S), \; x\not=F \text{ and } \mu(S\cup\{x\})\not=0\}. \]
\end{proposition}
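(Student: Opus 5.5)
The plan is to characterise the children directly from the definition of the edges of $G(\mathscr{I}(\text{Frob}=F))$, instead of quoting the general Proposition 2.4 of \cite{Covariedades} as a black box. By definition, $T\in\mathscr{I}(\text{Frob}=F)$ is a child of $S$ exactly when $T\setminus\{m(T)\}=S$. So I will show two inclusions: every such $T$ has the form $S\cup\{x\}$ with $x$ satisfying the four listed conditions, and conversely every such $S\cup\{x\}$ is a child of $S$.

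For the first inclusion, let $T$ be a child and put $x=m(T)$. Since $T\setminus\{x\}=S$, we get $T=S\cup\{x\}$ with $x\notin S$. As $T$ is a numerical semigroup, $x\in SG(S)$. Next, $x=m(T)$ is the smallest nonzero element of $T$, and $m(S)\in T\setminus\{0\}$ with $m(S)\neq x$, so $x<m(S)$. Because $F(T)=F\notin T$ while $x\in T$, we have $x\neq F$. Finally $T\in\mathscr{I}$ gives $\mu(S\cup\{x\})\neq 0$.

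For the converse, take $x\in SG(S)$ with $x<m(S)$, $x\neq F$ and $\mu(S\cup\{x\})\neq0$, and set $T=S\cup\{x\}$.
\begin{enumerate}
\item $T$ is a numerical semigroup because $x$ is a special gap.
\item Every nonzero element of $S$ is at least $m(S)>x$, and $x\neq 0$ since $0\in S$. Hence $m(T)=x$ and $T\setminus\{m(T)\}=S$.
\item For the Frobenius number, $x$ is a gap of $S$, so $x\le F$, and $x\neq F$ gives $x<F$. Then $\mathbb{Z}\setminus T=(\mathbb{Z}\setminus S)\setminus\{x\}$ still contains $F$ as its maximum, so $F(T)=F$.
\item Together with $\mu(T)\neq 0$, this shows $T\in\mathscr{I}(\text{Frob}=F)$, so $(T,S)$ is an edge and $T$ is a child of $S$.
\end{enumerate}

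There is no serious obstacle. The only points needing care are that $x=m(T)$ forces $x<m(S)$, and that $x<F$ guarantees the Frobenius number is unchanged. Alternatively, the statement follows at once by combining Proposition \ref{Prop:16} with Proposition 2.4 of \cite{Covariedades}, reading the extra conditions $x\neq F$ and $\mu\neq0$ as membership of $S\cup\{x\}$ in the covariety.
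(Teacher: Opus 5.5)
Your proof is correct, and it takes a different route from the paper. The paper gives no argument of its own. It obtains the statement by applying Proposition 2.4 of \cite{Covariedades} to the covariety of Proposition \ref{Prop:16}. That result says the children of $S$ in the tree of a covariety $\mathscr{C}$ are the sets $S\cup\{x\}$ with $x\in SG(S)$, $x<m(S)$ and $S\cup\{x\}\in\mathscr{C}$.

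You instead unpack the edge relation directly, checking both inclusions from the definition $(T,S)\in E \iff S=T\setminus\{m(T)\}$. This amounts to reproving that general result in this special case. It also makes explicit a translation the paper leaves implicit: for a special gap $x$ of $S$ with $x<m(S)$, the condition $S\cup\{x\}\in\mathscr{I}(\text{Frob}=F)$ is equivalent to $x\neq F$ together with $\mu(S\cup\{x\})\neq 0$.

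Your route also shows that this description of the children uses nothing from the covariety axioms; it follows purely from how the edges are defined. Proposition \ref{Prop:16} is genuinely needed only for the tree structure, namely that every non-root vertex $T$ has its parent $T\setminus\{m(T)\}$ inside the family.

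The paper's citation is shorter and keeps the exposition uniform with the other sections, which all rely on varieties, pseudo-varieties or ratio-covarieties. The cost is that these small verifications stay hidden.
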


If $S$ is a numerical semigroup and $n\in S\backslash\{0\}$, then we define (in honour of \cite{Apery}) the Apéry set of $S$ with respect to $n$ as $Ap(S,n)=\{s\in S \text{ such that } s-n\not\in S\}$.

The next result is the Lemma 2.4 of \cite{Libro}.

\begin{lemma}
  If $S$ is a numerical semigroup and $n\in S\backslash\{0\}$, then $Ap(S,n)=\{w(0), w(1), \ldots, w(n-1)\}$ where $w(i)$ is the lesser element of $S$ which is congruent with $i$ module $n$.
\end{lemma}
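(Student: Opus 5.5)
The plan is to prove the two containments $Ap(S,n)\subseteq\{w(0),\ldots,w(n-1)\}$ and $\{w(0),\ldots,w(n-1)\}\subseteq Ap(S,n)$ directly, and then to check that the $w(i)$ are pairwise distinct. First we must check that each $w(i)$ is well defined. Since $\mathbb{N}\backslash S$ is finite, every sufficiently large integer lies in $S$. In particular, some element of $S$ is congruent to $i$ modulo $n$, for instance $i+kn$ with $k$ large. Hence the set of elements of $S$ congruent to $i$ modulo $n$ is a non-empty subset of $\mathbb{N}$, and it has a least element $w(i)$.

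Next we show $w(i)\in Ap(S,n)$. Clearly $w(i)\in S$. If $w(i)-n$ were in $S$, it would be an element of $S$ congruent to $i$ modulo $n$ and strictly smaller than $w(i)$. This contradicts minimality (note $w(i)-n\in S$ forces $w(i)-n\geq 0$).

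For the reverse containment, take $s\in Ap(S,n)$ and let $i\in\{0,\ldots,n-1\}$ be its residue modulo $n$. Then $s\geq w(i)$ and $s\equiv w(i)$ modulo $n$, so $s=w(i)+kn$ for some $k\in\mathbb{N}$. If $k\geq 1$, then $s-n=w(i)+(k-1)n$ belongs to $S$, because $S$ is closed under addition and $n\in S$. This contradicts $s\in Ap(S,n)$, so $k=0$ and $s=w(i)$.

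Finally, the elements $w(0),\ldots,w(n-1)$ are pairwise distinct since they lie in distinct residue classes modulo $n$. Thus $Ap(S,n)$ has exactly $n$ elements.

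There is no real obstacle here. The only points requiring care are the existence of $w(i)$, which uses the finiteness of $\mathbb{N}\backslash S$, and the use of additive closure with $n\in S$ in the reverse containment.
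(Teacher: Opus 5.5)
Your proof is correct and complete. Existence of $w(i)$ follows from the finiteness of $\mathbb{N}\backslash S$; minimality gives $w(i)\in Ap(S,n)$; and additive closure with $n\in S$ rules out $s=w(i)+kn$ with $k\geq 1$. The paper gives no proof of its own and simply cites Lemma 2.4 of \cite{Libro}, whose content is exactly this routine verification.
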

  
Before of providing an algorithm to assemble $\mathscr{I}(\text{Frob}=F)$, we will provide a remark which will help us to perform the computations of the algorithm.
  
\begin{remark}
  If $S$ is a numerical semigroup and we are aware of $Ap(S,n)=\{w(0), w(1), \ldots, w(n-1)\}$ for some $n\in S\backslash\{0\}$, then:
  \begin{enumerate}
  \item The problem of belonging to $S$ is solved as an integer $x$ belongs to $S$ if and only if $x\geq w(x\mod n)$.
  \item By using the Remark 1 of \cite{Covariedades} one can compute $SG(S)$.
  \item By utilising the Remark 2 of \cite{Covariedades} we can calculate $Ap(S\cup\{x\},n)$ for all $x\in SG(S)$.
  \item We can construct msg$(S)$ as it is straightforward to observe that
  \begin{flalign*}  
    \text{msg}(S)=&\{ x\in(Ap(S,n)\backslash\{0\})\cup\{n\} \text{ such that } \\
    &x\not \in \left( (Ap(S,n)\backslash\{0\}) \cup\{n\}\right) + \left( (Ap(S,n)\backslash\{0\}) \cup\{n\}\right) \}. &&
\end{flalign*}

  \end{enumerate}
\end{remark}

At this point, we are ready to establish the following algorithm.

\begin{algorithm}  \label{alg:fixedFrobenius}
Computation of $\mathscr{I}(\text{Frob}=F)$, i.e., all internal numerical semigroups with fixed Frobenius number F. \\
\; \\
\textbf{Input:} A positive integer $F$. \\ 
\textbf{Output:} $\mathscr{I}(\text{Frob}=F)$. 
\begin{enumerate}[(1)]
\item  $A=\{\Delta(F)\}, \; B=\{\Delta(F)\}$ and $Ap(\Delta(F),F+1)=\{0, F+2,\ldots, 2F+1\}$. 
\item  \textbf{for all} {$S\in B$} \textbf{do} 
\item  \hspace{0.5cm} Compute $\theta(S)=\{x\in SG(S)$ such that $x<m(S)$, $x\not= F$ and $\mu(S\cup\{x\})\not = 0\}$. 
\item  \textbf{end for} 
\item  \textbf{i}f {$\bigcup\limits_{S\in B} \theta(S) =\emptyset$} \textbf{then return} $A$. 
\item  \textbf{end if} 
\item  $C= \bigcup\limits_{S\in B} \{S\cup\{x\}$ such that $x\in\theta(S)\}$. 
\item  $A=A\cup C$ and $B=C$. 
\item  \textbf{for all} {$S\in B$} \textbf{do} 
\item  \hspace{0.5cm} Compute $Ap(S,F+1)$. 
\item  \textbf{end for} 
\item  \textbf{go to} 2.
  \end{enumerate}
\end{algorithm}

We are using an example to show how to apply the Algorithm \ref{alg:fixedFrobenius}.

\begin{example}
  We are going to compute $\mathscr{I}($Frob$=5)$ by utilising the Algorithm \ref{alg:fixedFrobenius}.
  \begin{itemize}
  \item $A=\{\Delta(5)\}, \; B=\{\Delta(5)\}$ and $Ap(\Delta(5),6)=\{0, 7, 8, 9, 10, 11\}$.
  \item $\theta(\Delta(5))=\{3, 4\}$.
  \item $C=\{\Delta(5)\cup\{3\}, \Delta(5)\cup\{4\}\}$.
  \item $A=\{\Delta(5), \Delta(5)\cup\{3\}, \Delta(5)\cup\{4\}\}$ and $B=\{\Delta(5)\cup\{3\}, \Delta(5)\cup\{4\}\}$.
  \item $Ap(\Delta(5)\cup\{3\},6)$=$\{0, 3, 7, 8, 10, 11\}$ and $Ap(\Delta(5)\cup\{4\},6)$=$\{0, 4, 7, 8, 9, 11\}$.
  \item $\theta(\Delta(5)\cup\{3\})=\emptyset$ and $\theta(\Delta(5)\cup\{4\})=\{2\}$.
  \item $C=\{\Delta(5)\cup\{2, 4\}\}$.
  \item $A=\{\Delta(5), \Delta(5)\cup\{3\}, \Delta(5)\cup\{4\}, \Delta(5)\cup\{2,4\}\}$ and $B=\{\Delta(5)\cup\{2, 4\}\}$.
  \item $Ap(\Delta(5)\cup\{2, 4\},6)=\{0, 2, 4, 7, 9, 11\}$.
  \item $\theta(\Delta(5)\cup\{2, 4\})=\emptyset$.
  \end{itemize}
  The  Algorithm \ref{alg:fixedFrobenius} returns   $\mathscr{I}($Frob$=5)=\{\Delta(5), \Delta(5)\cup\{3\}, \Delta(5)\cup\{4\}, \Delta(5)\cup\{2,4\}\}$. 
\end{example}

Performing computations by applying the Algorithm \ref{alg:fixedFrobenius}, we achieve the Table \ref{tab:fixedFrobenius}, where $ins_F$ denotes the number of internal numerical semigroups with Frobenius number $F$, $lns_F$ the number of leaf numerical semigroups with Frobenius number $F$ and $ns_F$ is the number of numerical semigroups with Frobenius number $F$, is to say, $ns_F=ins_F+lns_F$. 

\begin{table}[htb]
\centering
\begin{tabular}{|c c c c | c c c c|}
\hline  
$F$ & $ins_F$ & $lns_F$ & $ns_F$ &  $F$ & $ins_F$ & $lns_F$ & $ns_F$ \\ \hline
  1 & 1 & 0 & 1 & 29 & 29165 & 5738 & 34903\\
  2 & 1 & 0 & 1 & 30 & 29816 & 2006 & 31822 \\
  3 & 2 & 0 & 2 & 31 & 59348 & 11506 & 70854 \\
  4 & 2 & 0 & 2 & 32 & 63912 & 4769 & 68681 \\
  5 & 4 & 1 & 5 & 33 & 116284 & 21107 & 137391 \\
  6 & 4 & 0 & 4 & 34 & 131020 & 9641 & 140661 \\
  7 & 9 & 2 & 11 & 35 & 246157 & 45924 & 292081 \\
  8 & 10 & 0 & 10 & 36 & 253249 & 17009 & 270258 \\
  9 & 17& 4 & 21 & 37 & 499275 & 92168 & 591443 \\
  10 & 21 & 1 & 22 & 38 & 542253 & 40200 & 582453 \\
  11 & 42 & 9 & 51 & 39 & 982946 & 173066 & 1156012 \\
  12 & 38 & 2 & 40 & 40 & 1084756 & 76563 & 1161319 \\
  13 & 85 & 21 & 106 & 41 & 2057338 & 368373 & 2425711  \\
  14 & 98 & 5 & 103 & 42 & 2144620 & 142583 & 2287203  \\
  15 & 167 & 33 & 200 & 43 & 4155812 & 733622 & 4889434  \\
  16 & 193 & 12 & 205 & 44 & 4468559 & 317112 & 4785671  \\
  17 & 379 & 86 & 465 & 45 & 8172756 & 1402411 & 9575167  \\
  18 & 383 & 22 & 405 & 46 & 9045603 & 633241 & 9678844  \\
  19 & 787 & 174 & 961 & 47 & 16984804 & 2935098 & 19919902  \\
  20 & 838 & 62 & 900 & 48 & 17728813 & 1168079 & 18896892  \\
  21 & 1529 & 299 & 1828 & 49 & 34170025 & 5840826 & 40010851  \\
  22 & 1781 & 132 & 1913 & 50 & 36870123 & 2575763 & 39445886  \\
  23 & 3390 & 706 & 4096 & 51 & 67524277 & 11270000 & 78794277  \\
  24 & 3378  & 200  & 3578 & 52 & 73913113 & 5017193 & 78930306  \\
  25 & 6855 & 1418 & 8273 & 53 & 138945789 & 23360286 & 162306075  \\
  26 & 7588 & 587 & 8175  & 54 & 146428617 & 9579565 & 156008182  \\
  27 & 13554 & 2578 & 16132  & 55 & 279182572 & 46617725 & 325800297  \\
  28 & 15165 & 1102 & 16267 & 56 & 300272713 & 20250566 & 320523279  \\
\hline
\end{tabular}
\caption{\label{tab:fixedFrobenius}  Computational results on the number of internal numerical semigroups up to Frobenius number 56, where $ins_F$ denotes the number of internal numerical semigroups with Frobenius number $F$, $lns_F$ the number of leaf numerical semigroups with Frobenius number $F$ and $ns_F$ is the total number of numerical semigroups with Frobenius number $F$.}
\end{table}

The results depicted in Table \ref{tab:fixedFrobenius} suggest us two new conjectures that we can establish as: 

\begin{conjecture}
\[ins_{F}> lns_{F} \text{, for } F\geq 1.\]
\end{conjecture}

\begin{conjecture}
\[lns_{F}> lns_{F+1}  \text{, for } F \text{ odd, and } F\geq 5. \]
\end{conjecture}

\section{Internal numerical semigroups with fixed multiplicity}
\label{sec:fixedmultiplicity}

Along this section $m$ will denote an integer greater or equal to two, $\mathscr{I}(\text{mul}=m)=\{S\in\mathscr{I}$ such that $m(S)=m\}$ and $\mathscr{L}(\text{mul}=m)=\{S\in\mathscr{L}$ such that $m(S)=m\}$.

A Frobenius pseudo-variety is a family $\mathscr{P}$ of numerical semigroups fulfilling the following conditions:
\begin{enumerate}
\item $\mathscr{P}$ has a maximum (respects to the order induced by the inclusion).
\item If $\{S, T\}\subseteq\mathscr{P}$, then $S\cap T\in\mathscr{P}$.
\item If $S\in\mathscr{P}$ and $S\not = \max(\mathscr{P})$, then $S\cup\{F(S)\}\in\mathscr{P}$.
\end{enumerate}

\begin{proposition}
 $\mathscr{I}(\text{mul}=m)$ is a Frobenius pseudo-variety.
\end{proposition}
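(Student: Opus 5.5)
We verify the three defining conditions of a Frobenius pseudo-variety in turn, mirroring the proof of Proposition \ref{Prop:10}.

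\emph{Maximum.} The candidate is $\{0,m,\to\}=\langle m,m+1,\ldots,2m-1\rangle$. It has multiplicity $m$ and Frobenius number $m-1$. Its minimal generators $m,\ldots,2m-1$ all exceed $m-1$, so $\mu=m\neq 0$ and it lies in $\mathscr{I}(\text{mul}=m)$. Every numerical semigroup with multiplicity $m$ is contained in $\{0,m,\to\}$, so it is the maximum.

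\emph{Intersections.} Let $\{S,T\}\subseteq\mathscr{I}(\text{mul}=m)$. By Lemma \ref{Lem:9}, $S\cap T$ is a numerical semigroup, and by the argument in the proof of Proposition \ref{Prop:10} it is internal. Its multiplicity is $m$, because $m\in S\cap T$ and $S\cap T\subseteq S$ contains no positive integer smaller than $m$. Hence $S\cap T\in\mathscr{I}(\text{mul}=m)$.

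\emph{Adjoining the Frobenius number.} Let $S\in\mathscr{I}(\text{mul}=m)$ with $S\neq\{0,m,\to\}$. By Lemma \ref{Lem:1}, $T=S\cup\{F(S)\}$ is a numerical semigroup, and as in Proposition \ref{Prop:10}, $F(S)\in\text{msg}(T)$ with $F(S)>F(T)$, so $T$ is internal. The one point needing care, and the only real obstacle, is that $m(T)=m$, i.e.\ that $F(S)>m$. If $F(S)<m$, then every integer greater than $F(S)$ lies in $S$, so $S=\{0,m,\to\}$ with $F(S)=m-1$, contradicting $S\neq\{0,m,\to\}$. Also $F(S)\neq m$ since $m\in S$. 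Therefore $F(S)>m$, and $m(T)=\min\{m,F(S)\}=m$, so $T\in\mathscr{I}(\text{mul}=m)$.
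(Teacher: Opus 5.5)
Your proof is correct and follows essentially the same route as the paper: you check that $\{0,m,\to\}$ is the maximum, get closure under intersection from Lemma~\ref{Lem:9} and the argument of Proposition~\ref{Prop:10}, and get closure under adjoining $F(S)$ from Proposition~\ref{Prop:10} together with $F(S)>m$ to preserve the multiplicity. You also justify two points the paper only asserts, namely that the maximum is itself internal and why $S\neq\{0,m,\to\}$ forces $F(S)>m$.
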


\begin{proof}
\;
  
\begin{enumerate}
\item   It is straightforward to check that $\Delta(m-1)=\{0, m,\to\}$ is the maximum of $\mathscr{I}(\text{mul}=m)$.
\item If $\{S,T\}\subseteq \mathscr{I}(\text{mul}=m)$, then $\{S,T\}\subseteq\mathscr{I}$ and $m(S)=m(T)=m$. Therefore $m(S\cap T)=m$ and by Proposition \ref{Prop:10} we get that  $S\cap T\in\mathscr{I}$. So   $S\cap T \in \mathscr{I}(\text{mul}=m)$.
  \item If $S\in\mathscr{I}(\text{mul}=m)$ and $S\not =\Delta(m-1)$, then $m < F(S)$. By using Proposition \ref{Prop:10} we get that $S\cup\{F(S)\}\in\mathscr{J}$. As $m(S\cup\{F(S)\})=m$ then  $S\cup\{F(S)\}\in\mathscr{I}(\text{mul}=m)$.
\end{enumerate}
\end{proof}

We define the graph $G(\mathscr{I}(\text{mul}=m))$ as follows: its set of vertexes is $\mathscr{I}(\text{mul}=m)$ and $(S,T)\in\mathscr{I}(\text{mul}=m)\times \mathscr{I}(\text{mul}=m)$ is a edge if and only if $T=S\cup\{F(S)\}$. 

The next result can be deduced from Theorem 3 of \cite{Pseudo}.

\begin{theorem}
  The graph $G(\mathscr{I}(\text{mul}=m))$ is a tree with root $\Delta(m-1)$. Besides, the set formed with all the children of a given vertex $S$ of the tree  $G(\mathscr{I}(\text{mul}=m))$ is $\{S\backslash\{x\} \text{ such that } x\in\text{msg}(S), \; x\not = m, \; x>F(S), \text{ and } \mu(S\backslash\{x\})\not = 0\}$.
\end{theorem}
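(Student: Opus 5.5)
The argument has two parts. We first show that $G(\mathscr{I}(\text{mul}=m))$ is a tree with root $\Delta(m-1)$. We then identify the children of a vertex, first as an abstract set and then in the concrete form given in the statement. The previous proposition shows that $\mathscr{I}(\text{mul}=m)$ is a Frobenius pseudo-variety with maximum $\Delta(m-1)$, and Theorem 3 of \cite{Pseudo} applies to such families. It states that the graph whose edges are $(S, S\cup\{F(S)\})$ is a tree rooted at the maximum. It also states that the children of $S$ are the semigroups $S\backslash\{x\}$ with $x\in\text{msg}(S)$, $x>F(S)$ and $S\backslash\{x\}\in\mathscr{P}$. Specialising to $\mathscr{P}=\mathscr{I}(\text{mul}=m)$ gives the tree structure immediately.

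For completeness I would also indicate the direct argument behind the tree part. Start at any $S\in\mathscr{I}(\text{mul}=m)$ and repeatedly replace $S$ by $S\cup\{F(S)\}$. By item 3 of the pseudo-variety definition, each step stays in the family while $S\neq\Delta(m-1)$. Each step lowers the genus by one, so the process terminates, and it can only stop at $\Delta(m-1)$. The sequence is forced at every step, which gives uniqueness of the path.

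The remaining work is to translate the condition $S\backslash\{x\}\in\mathscr{I}(\text{mul}=m)$ into the stated form. Let $x\in\text{msg}(S)$ with $x>F(S)$. By Lemma \ref{Lem:1}, $T=S\backslash\{x\}$ is a numerical semigroup. Moreover $F(T)=x$, since every integer greater than $x$ already lies in $S$ and differs from $x$. Membership $T\in\mathscr{I}$ is by definition $\mu(T)\neq 0$. It remains to check that $m(T)=m$ if and only if $x\neq m$.

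If $x=m$, then $m\notin T$, so $m(T)>m$ and $T$ is not in the family. If $x\neq m$, then $m\in T$. Every nonzero element of $T$ lies in $S\backslash\{0\}$, hence is at least $m$, and therefore $m(T)=m$.

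Combining the three conditions $x\in\text{msg}(S)$, $x\neq m$, $x>F(S)$ with $\mu(S\backslash\{x\})\neq 0$ gives exactly the set in the statement. There is no real obstacle here. The only point needing care is matching hypotheses with \cite{Pseudo}, namely that Theorem 3 there is stated for exactly the pseudo-variety axioms verified in the previous proposition.
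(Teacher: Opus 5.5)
Your proposal is correct and takes the paper's approach: the paper deduces the theorem directly from Theorem 3 of \cite{Pseudo}, applied to the Frobenius pseudo-variety $\mathscr{I}(\text{mul}=m)$. Your explicit check that, for $x\in\text{msg}(S)$ with $x>F(S)$, the condition $S\backslash\{x\}\in\mathscr{I}(\text{mul}=m)$ is equivalent to $x\neq m$ together with $\mu(S\backslash\{x\})\neq 0$ spells out the translation step the paper leaves implicit.
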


\begin{proposition}
  \[ \{g(S) \text{ such that } S\in \mathscr{I}(\text{mul}=m)\} = \{m-1, \to\}. \]
\end{proposition}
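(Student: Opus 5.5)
We prove the two inclusions separately, since the statement is an equality of subsets of $\mathbb{N}$.

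\emph{Lower bound.} Let $S\in\mathscr{I}(\text{mul}=m)$. Then $m(S)=m$, so $\{1,\ldots,m-1\}\subseteq\mathbb{N}\backslash S$. Hence $g(S)\geq m-1$, and $\{g(S) \text{ such that } S\in\mathscr{I}(\text{mul}=m)\}\subseteq\{m-1,\to\}$. This does not use internality at all.

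\emph{Every $g\geq m-1$ occurs.} For each $g\geq m-1$ we give an explicit internal semigroup of multiplicity $m$ and genus $g$. Write $g=m-1+k$ with $k\geq 0$, and take
\[
S_k=\{0,m\}\cup\{m+k+1,\to\},
\]
where for $k=0$ this is just $\Delta(m-1)=\{0,m,\to\}$.

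\begin{enumerate}
\item \emph{$S_k$ is a numerical semigroup.} Its complement in $\mathbb{N}$ is finite. For closure, note that any sum of two nonzero elements is at least $2m$. The case $2m\geq m+k+1$, i.e.\ $k\leq m-1$, is immediate: every such sum lies in $\{m+k+1,\to\}$. The case $k\geq m$ is the only real obstacle, because then $2m\notin S_k$, so $S_k$ fails to be closed. For that range we modify the construction as follows.
\item \emph{Range $k\geq m$.} Let $T_k=\{0\}\cup\{m\mathbb{N}\cap[0,m+k]\}\cup\{m+k+1,\to\}$, i.e.\ we add all multiples of $m$ below $m+k+1$. This set is closed under addition, its multiplicity is $m$, and its Frobenius number is the largest non-multiple of $m$ below $m+k+1$. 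Since adding multiples of $m$ lowers the genus, we instead count directly. The gaps of $T_k$ are the non-multiples of $m$ in $[1,m+k]$. As $k$ runs over $\mathbb{N}$, this count increases by $0$ or $1$ at each step, and it starts at $m-1$ when $k=0$. Hence every value $g\geq m-1$ is attained for suitable $k$.
\end{enumerate}

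\emph{$T_k$ is internal.} Let $F=F(T_k)$. Every integer $x>F$ lies in $T_k$. Choose $x$ with $F<x<F+m$ and $x\not\equiv 0 \pmod m$; such an $x$ exists because $m\geq 2$. Then $x-m<F$ is a non-multiple of $m$, so $x-m\notin T_k$. Suppose $x=a+b$ with $a,b\in T_k\backslash\{0\}$. Both $a$ and $b$ cannot be multiples of $m$, since $x$ is not. So one of them, say $a$, is a non-multiple of $m$ lying in $T_k$, which forces $a>F$. The other satisfies $b\geq m$, so $x=a+b>F+m$, contradicting $x<F+m$. Therefore $x\in\text{msg}(T_k)$ and $x>F(T_k)$, so $\mu(T_k)\neq0$ and $T_k\in\mathscr{I}(\text{mul}=m)$.

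The same argument shows that $S_k$ is internal in the range $k\leq m-1$. Alternatively, we may use the $T_k$ family throughout, since $T_k=S_k$ when $k\leq m-1$.

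The main obstacle is ensuring that the genus hits every value $g\geq m-1$. We handle this by the counting observation in the second step: the gap count of $T_k$ moves in unit steps and is unbounded. A cleaner alternative, if that is preferred, is to take $\{0\}\cup m\mathbb{N}\cup\{F+1,\to\}$ with $F$ ranging over the non-multiples of $m$ greater than $m$. Its genus is $F-\lfloor F/m\rfloor$, which also increments by $1$ each time $F$ advances to the next non-multiple of $m$.
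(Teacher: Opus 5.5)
Your family $T_k$ is the paper's family $\{0,m,2m,\ldots,qm\}\cup\{qm+r+1,\to\}$, and your lower bound is the paper's. The only difference is in how the genus is reached. The paper writes $a=q(m-1)+r$ with $q\geq 1$, $0\leq r\leq m-2$ and reads off $g=a$ directly. You instead show that the gap count starts at $m-1$, moves in steps of $0$ or $1$, and is unbounded, which is also correct.

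The step that fails is your proof that $T_k$ is internal. You assert that some $x$ with $F<x<F+m$ and $m\nmid x$ exists ``because $m\geq 2$'', but this is false for $m=2$. Every even number lies in $T_k$, so $F$ is odd, and the only integer strictly between $F$ and $F+2$ is the even number $F+1$. For instance, $T_0=\langle 2,3\rangle$ has $F=1$, and there is no odd $x$ with $1<x<3$. Since the proposition is claimed for every $m\geq 2$, your argument leaves $\mu(T_k)\neq 0$ unproved when $m=2$.

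The fix is already contained in your decomposition argument. Suppose $x=a+b$ with $a,b\in T_k\backslash\{0\}$ and $m\nmid x$. Then one summand is a non-multiple of $m$ in $T_k$, hence at least $F+1$, and the other is at least $m$, so $x\geq F+m+1$. Thus any non-multiple $x$ of $m$ with $F<x\leq F+m$ works, and $x=F+m$ always qualifies: it is not a multiple of $m$ because $F$ is not, and it lies in $T_k$ because it exceeds $F$. With this change, $x\in\text{msg}(T_k)$ and $x>F(T_k)$ for every $m\geq 2$, and the proof is complete.

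A smaller slip: in your closing alternative, $F$ must range over the non-multiples of $m$ with $F\geq m-1$, not $F>m$. The genus $F-\lfloor F/m\rfloor$ equals $m-1$ only at $F=m-1$, so with $F>m$ the value $g=m-1$ is missed.
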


\begin{proof}
It is clear that if $m(S)=m$, then $g(S)\geq m-1$. To conclude this proof we have to check that whether there exists $S\in\mathscr{I}(\text{mul}=m)$ such that $g(S)=a$ with $a\in\{m-1, \to\}$. Indeed, $a$ can be expressed as $a=q\cdot(m-1)+r$ with $q\in\mathbb{N}\backslash\{0\}$ and $r\in\{0,\ldots,m-2\}$. It is straightforward that $S=\{0, m, 2m, \ldots, qm, qm+r+1, \to\}\in \mathscr{I}(\text{mul}=m)$ and $g(S)=q(m-1)+r=a$.
\end{proof}

Now, we have all the tools needed to provide an algorithm to obtain all the internal numerical semigroups with multiplicity $m$ and genus $g$. Next, we present this algorithm.

\begin{algorithm} \label{alg:fixedmultiplicity}
  Computation of $\mathscr{I}(\text{mul}=m,\text{ gen}=g)$, i.e., all internal numerical semigroups with multiplicity $m$ and genus $g$. \\
 \; \\
\textbf{Input:} An integer $g$ fulfilling that $g\geq m-1$.  \\
\textbf{Output:}  $\mathscr{I}(\text{mul}=m,\text{ gen}=g)=\{S\in\mathscr{I}(\text{mul}=m)\text{ such that } g(S)=g\}$. 
\begin{enumerate}[(1)]
\item  $A=\{\Delta(m-1)\}, \; i=m-1$.
\item  \textbf{if} {$i=g$} \textbf{then return} $A$. 
\item  \textbf{end if} 
\item  \textbf{for all} {$S\in A$} \textbf{do} 
\item  \hspace{0.4cm} Compute $\gamma(S)=\{x\in \text{msg}(S)\backslash\{m\}$ such that $ x>F(S)$  and  $\mu(S\backslash\{x\})\not = 0\}$. 
\item  \textbf{end for} 
\item  $A= \bigcup\limits_{S\in A} \{S\backslash\{x\} \text{ such that }x\in\gamma(S)\}$. 
\item  $i=i+1$. 
\item  \textbf{go to} 2.
  \end{enumerate}
\end{algorithm}

We will provide an example to show how the Algorithm \ref{alg:fixedmultiplicity} works.

\begin{example}
  We are going to compute  $\mathscr{I}(\text{mul}=5,\text{ gen}=7)$ by applying the Algorithm \ref{alg:fixedmultiplicity}.
  \begin{itemize}
  \item $A=\{\Delta(4)\}, \; i=4$.
  \item $\gamma(\Delta(4))=\{6, 7, 8\}$.
  \item $A=\{\langle 5, 7, 8, 9, 11\rangle, \; \langle 5, 6, 8, 9\rangle, \; \langle 5, 6, 7, 9\rangle\}, \; i=5$.
  \item  $\gamma=(\langle 5, 7, 8, 9, 11\rangle)=\{ 7, 8, 9\}, \; \gamma(\langle 5, 6, 8, 9\rangle)=\{8\}, \; \gamma(\langle 5, 6, 7, 9\rangle)=\emptyset$.
  \item $A=\{\langle 5, 8, 9, 11, 12\rangle, \; \langle 5, 7, 9, 11, 13\rangle, \; \langle 5, 7, 8, 11\rangle, \; \langle 5, 6, 9, 13\rangle\}, \; i=6$.
  \item $\gamma(\langle 5, 8, 9, 11, 12\rangle)$=$\{ 8, 9, 11\}$,$ \; \gamma(\langle 5, 7, 9, 11, 13\rangle)$=$\{9, 11\}$,$ \; \gamma(\langle 5, 7, 8, 11\rangle)$=$\emptyset$, and $\gamma(\langle 5, 6, 9, 13\rangle)$=$\{9\}$.
  \item $A=\{\langle 5, 9, 11, 12, 13\rangle, \langle 5, 8, 11, 12, 14\rangle, \langle 5, 8, 9, 12\rangle$, $\langle 5, 7, 11, 13\rangle$, $\langle 5, 7, 9, 13\rangle$, $\langle 5, 6, 13, 14\rangle \}$, $i=7$.
  \end{itemize}
  The  Algorithm \ref{alg:fixedmultiplicity} returns
  \begin{equation*}
    \begin{split}
      \mathscr{I}(\text{mul}=5,\text{ gen}=7)= &\{\langle 5, 9, 11, 12, 13\rangle, \; \langle 5, 8, 11, 12, 14\rangle, \; \langle 5, 8, 9, 12\rangle, \; \\
      & \langle 5, 7, 11, 13\rangle,  \; \langle 5, 7, 9, 13\rangle,   \; \langle 5, 6, 13, 14\rangle \}.
      \end{split}
\end{equation*}    
\end{example}

   Performing computations by applying the Algorithm  \ref{alg:fixedmultiplicity}, we attain the Table \ref{tab:fixedmultiplicity-m5m8} for fixed multiplicities equals to $5$ (left column) and $8$ (right column), where $ins_{gm}$ denotes the number of internal numerical semigroups with
genus $g$ and multiplicity $m$, $lns_{gm}$ the number of leaf numerical semigroups with genus $g$ and multiplicity $m$, and $ns_{gm}$ is the number of numerical semigroups with genus $g$ and multiplicity $m$, is to say, $ns_{gm} = ins_{gm} + lns_{gm}$. 

\begin{table}[H] 
\begin{center}
  \begin{tabular}{|c || c c c | c c c|}
\hline
     &   \multicolumn{3}{ |c| }{$m=5$} & \multicolumn{3}{ |c| }{$m=8$} \\
  $g$& $ins_{gm}$ & $lns_{gm}$ & $ns_{gm}$  & $ins_{gm}$ & $lns_{gm}$ & $ns_{gm}$ \\ \hline
 4 & 1  & 0  & 1  &    &    &  \\ 
 5 & 3  & 1  & 4  &    &    &   \\ 
 6 & 4  & 3  & 7  &    &    &   \\ 
 7 & 6  & 4  & 10 & 1  & 0  & 1 \\ 
 8 & 7  & 6  & 13 & 6  & 1  & 7   \\ 
 9 & 12 & 4  & 16 & 16 & 6  & 22  \\ 
10 & 11 & 11 & 22 & 28 & 16 & 44 \\ 
11 & 18 & 6  & 24 & 47 & 25 & 72 \\ 
12 & 18 & 14 & 32 & 71 & 45 &116  \\  
13 & 24 & 11 & 35 &102 & 70 &172  \\   
14 &30  &13  & 43 &141 &98  &239  \\  
15 & 37 &14  &51  &198 &127 &325  \\  
16 &40  &21  &61  &256 &185 &441   \\ 
17 &50  &18  &68  &337 &236 &573 \\   
18 &55  &25  &80  &436 &301 &737  \\ 
19 &67  &22  &89  &564 &381 &945  \\ 
20 &75  &29  &104 &711 &482 &1193  \\ 
21 &90  &25  &115 &889 &601 &1490  \\  
22 &94  &39  &133 &1094&753 &1847  \\  
23 &113 &30  &143 &1365&897 &2262  \\  
24 &124 &40  &164 &1681&1085&2766 \\   
25 &140 &41  &181 &2046&1307&3353 \\  
26 &156 &45  &201 &2473&1562&4035  \\ 
\hline
  \end{tabular}
      \caption{ Computational results on the number of internal numerical semigroups up to genus 26 for multiplicities $5$ (left column) and $8$ (right column), where $g$ denotes the genus of the numerical semigroup, $m$ its multiplicity,  $ins_{gm}$ is the number of internal numerical semigroups with genus $g$ and multiplicity $m$,  $lns_{gm}$ the number of leaf numerical semigroups with genus $g$ and multiplicity $m$,  $ns_{gm}$ the total number of numerical semigroups with genus $g$ and multiplicity $m$.}
\label{tab:fixedmultiplicity-m5m8}   
\end{center}
\end{table}

From the data depicted in the Table \ref{tab:fixedmultiplicity-m5m8} we can establish the next conjecture:

\begin{conjecture}
  \[ ns_{gm} \leq ns_{(g+1)m},  \; \mbox{ for } m\geq 2 \mbox{ with } g\geq m-1. \]
\end{conjecture}  

On the other hand, we observe that  $\#\mathscr{I}(\text{mul}=5,\text{ gen}=9)=12 \not\leq 11 = \#\mathscr{I}(\text{mul}=5,\text{ gen}=10)$ and $\#\mathscr{L}(\text{mul}=5,\text{ gen}=22)=39 \not\leq 30 = \#\mathscr{L}(\text{mul}=5,\text{ gen}=23)$. Anyway, reviewing our data, where all the internal and leaf numerical semigroups have been computed up to genus $26$ for all available multiplicities, we can state two new conjectures.

\begin{conjecture}
  If $k\geq m-1$ and $m\geq 7$, then
  \[\#\mathscr{I}(\text{mul}=m,\text{ gen}=k) \leq \#\mathscr{I}(\text{mul}=m,\text{ gen}=k+1). \]
\end{conjecture}

\begin{conjecture}
  If $k\geq m-1$ and $m\geq 6$, then
  \[\#\mathscr{L}(\text{mul}=m,\text{ gen}=k) \leq \#\mathscr{L}(\text{mul}=m,\text{ gen}=k+1). \]
\end{conjecture}

   Performing computations by applying the Algorithm  \ref{alg:fixedmultiplicity}, we attain the Table \ref{tab:fixedmultiplicity-genus-11and26}, where $ins_{gm}$ denotes the number of internal numerical semigroups with
genus $g$ and multiplicity $m$, $lns_{gm}$ the number of leaf numerical semigroups with genus $g$ and multiplicity $m$, and $ns_{gm}$ is the number of numerical semigroups with genus $g$ and multiplicity $m$, is to say, $ns_{gm} = ins_{gm} + lns_{gm}$. Besides, $ins_g$ symbolises the
number of internal numerical semigroups with genus $g$, $lns_g$ the
number of leaf numerical semigroups with genus $g$ and $ns_g$ is the
total number of numerical semigroups with genus $g$.

\begin{center}
  \begin{longtable}{|c c c c c | c c c|}
\caption{\label{tab:fixedmultiplicity-genus-11and26}  Computational results on the number of internal numerical semigroups for genera 11 and 26 for all available multiplicities, where $g$ denotes the genus of the numerical semigroup, $m$ its multiplicity,  $ins_{gm}$ is the number of internal numerical semigroups with genus $g$ and multiplicity $m$,  $lns_{gm}$ the number of leaf numerical semigroups with genus $g$ and multiplicity $m$,  $ns_{gm}$ the total number of numerical semigroups with genus $g$ and multiplicity $m$,  $ins_{g}$ the number of internal numerical semigroups with genus $g$,  $lns_{g}$ the number of leaf numerical semigroups with genus $g$ and $ns_g$ is the total number of numerical semigroups with genus $g$.}    \\

\hline
  \multicolumn{1}{|c}{$g$} & \multicolumn{1}{c}{$m$} & \multicolumn{1}{c}{$ins_{gm}$} & \multicolumn{1}{c}{$lns_{gm}$} & \multicolumn{1}{c|}{$ns_{gm}$} & \multicolumn{1}{c}{$ins_g$} & \multicolumn{1}{c}{$lns_g$} & \multicolumn{1}{c|}{$ns_g$} \\ \hline
  \endfirsthead

\hline \multicolumn{8}{|c|}%
{{\bfseries \tablename\ \thetable{} -- continued from previous page}} \\
\hline
  \multicolumn{1}{|c}{$g$} & \multicolumn{1}{c}{$m$} & \multicolumn{1}{c}{$ins_{gm}$} & \multicolumn{1}{c}{$lns_{gm}$} & \multicolumn{1}{c|}{$ns_{gm}$} & \multicolumn{1}{c}{$ins_g$} & \multicolumn{1}{c}{$lns_g$} & \multicolumn{1}{c|}{$ns_g$} \\ \hline
\endhead

\hline \multicolumn{8}{|r|}{\textbf{Continued on next page}} \\ \hline
\endfoot

\hline
\endlastfoot
  
   & 2 & 1 & 0 & 1 &  &  &   \\
   & 3 & 4 & 0 & 4 &  &  &   \\
   & 4 & 11& 4 & 15&  &  &   \\
   & 5 & 18& 6 & 24&  &  &   \\
   & 6 & 31& 18& 49&  &  &   \\
11 & 7 & 38& 26& 64& 233 & 110 & 343  \\
   & 8 & 47& 25& 72&  &  &   \\
   & 9 & 44& 22& 66&  &  &   \\
   & 10& 29& 8 & 37&  &  &   \\
   & 11& 9 & 1 & 10&  &  &   \\
   & 12& 1 & 0 & 1 &  &  &   \\ \hline
   &2 &1    &0    &1    &  &  &   \\ 
   &3 &9    &0    &9    &  &  &   \\ 
   &4 &58   &11   &69   &  &  &   \\ 
   &5 &156  &45   &201  &  &  &   \\ 
   &6 &555  &238  &793  &  &  &   \\ 
   &7 &998  &530  &1528 &  &  &   \\ 
   &8 &2473 &1562 &4035 &  &  &   \\ 
   &9 &4014 &2769 &6783 &  &  &   \\ 
   &10&7739 &5543 &13282&  &  &   \\ 
   &11&10933&8065 &18998&  &  &   \\ 
   &12&18892&13901&32793&  &  &   \\ 
   &13&23841&17470&41311&  &  &   \\ 
26 &14&35223&25177&60400&482133&288699&770832 \\ 
   &15&42927&30082&73009&  &  &   \\ 
   &16&52630&34759&87389&  &  &   \\ 
   &17&57383&36571&93954&  &  &   \\ 
   &18&59427&36261&95688&  &  &   \\ 
   &19&57484&32408&89892&  &  &   \\ 
   &20&48870&23640&72510&  &  &   \\ 
   &21&33370&12981&46351&  &  &   \\ 
   &22&17068&5077 &22145&  &  &   \\ 
   &23&6239 &1353 &7592 &  &  &   \\ 
   &24&1564 &232  &1796 &  &  &   \\ 
   &25&254  &23   &277  &  &  &   \\ 
   &26&24   &1    &25   &  &  &   \\ 
   &27&1    &0    &1    &  &  &   \\ 
\hline
\end{longtable}
\end{center}

From the data collected in tables \ref{tab:fixedmultiplicity-m5m8} and \ref{tab:fixedmultiplicity-genus-11and26}, we can establish  a new conjecture:

\begin{conjecture}
If $k\geq m-1$, then  $\#\mathscr{L}(\text{mul}=m,\text{ gen}=k)\leq \#\mathscr{I}(\text{mul}=m,\text{ gen}=k)$.
\end{conjecture}

\section{Internal numerical semigroups with fixed multiplicity and Frobenius number}
\label{sec:fixedmulFrob}

Along this section, we will denote as $m$ an integer greater or equal to two and $F$ an integer such that $m-1\leq F$ and $m\not| F$. Let denote by $\mathscr{I}(\text{mul}=m,\text{ Frob}=F)=\{S\in \mathscr{I} \text{ such that } m(S)=m \text{ and } F(S)=F\}$ and $\mathscr{L}(\text{mul}=m,\text{ Frob}=F)=\{S\in \mathscr{L} \text{ such that } m(S)=m \text{ and } F(S)=F\}$.

Let $S$ be a numerical semigroup such that $S\not = \mathbb{N}$. The \emph{ratio} of $S$ is $r(S)=\min\{s\in S \text{ such that } m(S)\not | s\}$. Note that $r(S)=\min\left(\text{msg}(S)\backslash \{m(S)\}\right)$.

A ratio-covariety is a family $\mathscr{R}$ of numerical semigroups satisfying the following properties:
\begin{enumerate}
\item $\mathscr{R}$ has a minimum.
\item If $\{S, T\}\subseteq\mathscr{R}$, then $S\cap T\in\mathscr{R}$.
\item If $S\in\mathscr{R}$ and $S\not = \min(\mathscr{R})$, then $S\backslash\{r(S)\}\in\mathscr{R}$.
\end{enumerate}

Now we can state the next result and provide a proof for it.

\begin{proposition}\label{Prop:31}
  $\mathscr{I}(\text{mul}=m,\text{ Frob}=F)$ is a ratio-covariety.
\end{proposition}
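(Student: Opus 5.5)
We have to check the three defining conditions of a ratio-covariety for $\mathscr{I}(\text{mul}=m,\text{ Frob}=F)$, following the pattern of the proof of Proposition \ref{Prop:16}.

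\textbf{Minimum.} The natural candidate is
\[
M=\{0,m,2m,\ldots,\lfloor F/m\rfloor m\}\cup\{F+1,\to\}=\langle m\rangle\cup\{F+1,\to\}.
\]
Since $m-1\leq F$ and $m\nmid F$, $M$ is a numerical semigroup with $m(M)=m$ and $F(M)=F$. If $T$ is any numerical semigroup with $m(T)=m$ and $F(T)=F$, then $T$ contains $\langle m\rangle$ and every integer greater than $F$, so $M\subseteq T$. It remains to show $M\in\mathscr{I}$. Take the least $x>F$ with $m\nmid x$; one of $F+1$ and $F+2$ works, because $m\geq 2$. This $x$ cannot be written as a sum of two nonzero elements of $M$. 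Indeed, any such sum either involves an element greater than $F$, and is then at least $F+1+m>x$, or is a multiple of $m$. Hence $x\in\text{msg}(M)$ and $x>F(M)$, so $\mu(M)\neq 0$. When $m=F+1$ we have $M=\Delta(m-1)$, and the same argument applies.

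\textbf{Intersections.} If $S,T\in\mathscr{I}(\text{mul}=m,\text{ Frob}=F)$, then Lemma \ref{Lem:9} gives $F(S\cap T)=F$. Also $m\in S\cap T$ while no smaller positive integer lies in it, so $m(S\cap T)=m$. Proposition \ref{Prop:10} then gives $S\cap T\in\mathscr{I}$.

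\textbf{Removing the ratio.} This is the step that needs care. Let $S\neq M$ in the family and put $r=r(S)$. Since $S\neq M$, some element of $S$ below $F$ is not a multiple of $m$, so $r<F$. By the remark after the definition, $r\in\text{msg}(S)$, so Lemma \ref{Lem:1} shows that $S\backslash\{r\}$ is a numerical semigroup. Because $m<r$ (as $m\nmid r$) and $r<F$, removing $r$ leaves both the multiplicity and the Frobenius number unchanged.

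For internality, take $x\in\text{msg}(S)$ with $x>F$. Then $x\in S\backslash\{r\}$. If $x$ were a sum of two nonzero elements of $S\backslash\{r\}$, those elements would also lie in $S$, contradicting $x\in\text{msg}(S)$. So $x\in\text{msg}(S\backslash\{r\})$ and $x>F(S\backslash\{r\})$, hence $\mu(S\backslash\{r\})\neq 0$.

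The only delicate points are verifying that $M$ itself is internal and that $r<F$, both handled above; the rest mirrors Proposition \ref{Prop:16}.
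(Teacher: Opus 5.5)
Your proof is correct and follows the same three-step verification as the paper. The steps are: the minimum $D(m,F)=\langle m\rangle\cup\{F+1,\to\}$; closure under intersection via Lemma~\ref{Lem:9} and Proposition~\ref{Prop:10}; and removal of $r(S)$ with $m<r(S)<F$, which keeps every minimal generator above $F$.

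You also justify two points the paper simply calls clear. First, that $D(m,F)$ is itself internal, via the least $x\in\{F+1,F+2\}$ with $m\nmid x$. Second, that $r(S)<F$ whenever $S\neq D(m,F)$.
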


\begin{proof}
\;
  
  \begin{itemize}
  \item It is clear that $D(m, F)=\langle m \rangle \cup\{F+1,\to\}$ is the minimum of  $\mathscr{I}(\text{mul}=m,\text{ Frob}=F)$.
  \item If $\{S, T\}\subseteq \mathscr{I}(\text{mul}=m,\text{ Frob}=F)$, then by using the Proposition \ref{Prop:10} we know that $S\cap T\in\mathscr{I}$. Besides, it is obvious that $m(S\cap T)=m$ and $F(S\cap T)=F$. Therefore, $S\cap T\in \mathscr{I}(\text{mul}=m,\text{ Frob}=F)$.
    \item If $S\in \mathscr{I}(\text{mul}=m,\text{ Frob}=F)$ and $S\not = D(m, F)$, then $m< r(S) < F$ and by applying the Lemma \ref{Lem:1} we get that $S\backslash\{r(S)\}$ is a numerical semigroup with multiplicity $m$ and Frobenius number $F$. To conclude this proof, we will see that $S\backslash \{r(S)\}\in\mathscr{I}$. Indeed, as $S\in\mathscr{I}$ then there exists $x\in\text{msg}(S)$ such that $x> F$. It is clear that $x\in\text{msg}(S\backslash\{r(S)\})$ and thus $\mu(S\backslash\{r(S)\})\not = 0$. As a result $S\backslash\{r(S)\}\in\mathscr{I}(\text{mul}=m,\text{ Frob}=F)$. 
  \end{itemize}
\end{proof}

Let define the graph $G(\mathscr{I}(\text{mul}=m,\text{ Frob}=F))$ as follows:
$\mathscr{I}(\text{mul}=m,\text{ Frob}=F)$ is its set of vertexes and $(S,T)\in\mathscr{I}(\text{mul}=m,\text{ Frob}=F)\times \mathscr{I}(\text{mul}=m,\text{ Frob}=F)$ is a edge if and only if $T=S\backslash\{r(S)\}$. 

By applying the previous Proposition \ref{Prop:31} and the [Propositions 3 and 4 from \cite{Ratio}], we obtain the following result:

\begin{theorem}
  $G(\mathscr{I}(\text{mul}=m,\text{ Frob}=F))$ is a tree and $D(m,F)$ is its root. Besides, the set containing all the children of a vertex $S$ of the tree $G(\mathscr{I}(\text{mul}=m,\text{ Frob}=F))$ is
  \[\{S\cup\{x\} \text{ such that } x\in SG(S), \; m<x<r(S), \; x\not = F \text{ and } \mu(S\cup\{x\})\not = 0\} . \]
\end{theorem}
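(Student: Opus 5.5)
The plan is to derive the theorem from Proposition \ref{Prop:31} together with the general theory of ratio-covarieties in \cite{Ratio}, and to check directly the description of the children.

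\textbf{The tree structure.} Starting from $S\in\mathscr{I}(\text{mul}=m,\text{ Frob}=F)$, define $S_0=S$ and $S_{i+1}=S_i\backslash\{r(S_i)\}$ as long as $S_i\neq D(m,F)$. By Proposition \ref{Prop:31}, every $S_i$ stays in the family. Each step removes one element of $S$ strictly between $m$ and $F$ that is not a multiple of $m$. There are only finitely many such elements, so the chain ends at the minimum $D(m,F)$. This gives a path from $S$ to the root. Since each vertex has exactly one outgoing edge, namely to $S\backslash\{r(S)\}$, the path is unique. This is exactly the argument of Propositions 3 and 4 of \cite{Ratio}, so we may simply invoke them.

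\textbf{The children.} We must show that $T$ is a child of $S$, meaning $T\in\mathscr{I}(\text{mul}=m,\text{ Frob}=F)$ and $T\backslash\{r(T)\}=S$, if and only if $T=S\cup\{x\}$ for some $x$ with the listed properties.

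\emph{Forward direction.} Suppose $T$ is a child of $S$ and set $x=r(T)$. Then:
\begin{itemize}
\item $T=S\cup\{x\}$ and $S\cup\{x\}$ is a numerical semigroup, so $x\in SG(S)$.
\item $x\neq F$ because $F(T)=F$.
\item $x>m$ because $m(T)=m$ and $m\nmid x$.
\item $x<r(S)$: every element of $S$ not divisible by $m$ lies in $T$, so it is at least $r(T)=x$, and $x\notin S$.
\item $\mu(S\cup\{x\})\neq 0$ because $T\in\mathscr{I}$.
\end{itemize}

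\emph{Converse direction.} Take $x\in SG(S)$ with $m<x<r(S)$, $x\neq F$ and $\mu(S\cup\{x\})\neq 0$, and set $T=S\cup\{x\}$. We check each property of $T$:
\begin{itemize}
\item $T$ is a numerical semigroup because $x\in SG(S)$.
\item $m(T)=m$ because $x>m$.
\item Special gaps satisfy $x<F(S)$ unless $x=F$, so $x\neq F$ forces $x<F$ and hence $F(T)=F$.
\item $m\nmid x$: if $m\mid x$, then $x\in\langle m\rangle\subseteq S$, which is impossible.
\item $r(T)=x$: $x<r(S)$ and $m\nmid x$, so $x$ is the least element of $T$ not divisible by $m$.
\item $T\backslash\{r(T)\}=S$, and $T\in\mathscr{I}$ by the $\mu$ condition.
\end{itemize}

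\textbf{Main obstacle.} The only delicate point is the verification that $r(T)=x$ and $F(T)=F$. This rests on the elementary fact that a special gap different from $F(S)$ is smaller than $F(S)$, which follows since $S\cup\{x\}$ closed under addition and $x>F$ would force $x\in S$. The rest is bookkeeping.
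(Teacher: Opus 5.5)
Your argument is correct and is essentially the paper's: the theorem follows from Proposition~\ref{Prop:31} together with Propositions 3 and 4 of \cite{Ratio}. The only difference is that you verify the description of the children by hand rather than citing the general ratio-covariety result. Concretely, you check that for $x\in SG(S)$ with $m<x<r(S)$, the semigroup $S\cup\{x\}$ lies in $\mathscr{I}(\text{mul}=m,\text{ Frob}=F)$ exactly when $x\neq F$ and $\mu(S\cup\{x\})\neq 0$, which makes the argument slightly more self-contained.
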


We have already introduced all the needed concepts to provide an algorithm to find all the internal numerical semigroups with multiplicity $m$ and Frobenius number $F$.

\begin{algorithm} \label{alg:fixedmultiplicity-Frobenius}
  Computation of $\mathscr{I}(\text{mul}=m,\text{ Frob}=F)$, i.e., all internal numerical semigroups with multiplicity $m$ and Frobenius number $F$. \\
  \; \\
\textbf{Input:} Two integer numbers, $m$ and $F$, such that $2\leq m\leq F+1$ and $m\not | F$.  \\
\textbf{Output:}  $\mathscr{I}(\text{mul}=m,\text{ Frob}=F)$. 
\begin{enumerate}[(1)]
\item  Compute $Ap(D(m,F),m)$.
\item  $A=\{D(m,F)\}$ and $B=\{D(m,F)\}$. 
\item  \textbf{for all} {$S\in B$} \textbf{do} 
\item  \hspace{0.5cm} Compute $\lambda(S)=\{x\in SG(S) \text{ such that } m<x<r(S), \; x\not =F$ \text{ and } $\mu(S\cup\{x\})\not = 0\}$. 
\item  \textbf{end for} 
\item  \textbf{if} {$ \bigcup\limits_{S\in B}\lambda(S)=\emptyset$} \textbf{then return} $A$. 
\item  \textbf{end if} 
\item  $C= \bigcup\limits_{S\in B} \{S\cup\{x\} \text{ such that }x\in\lambda(S)\}$. 
\item  $A=A\cup C$ and $B=C$. 
\item  \textbf{for all} {$S\in B$} \textbf{do} 
\item  \hspace{0.5cm} Compute $Ap(S,m)$. 
\item  \textbf{end for} 
\item  \textbf{go to} 3.
  \end{enumerate}
\end{algorithm}

Let see an example of usage of Algorithm \ref{alg:fixedmultiplicity-Frobenius} to compute all the internal numerical semigroups with multiplicity equals to 4 and Frobenius number equals to 10.

\begin{example}
  We are going to compute  $\mathscr{I}(\text{mul}=4,\text{ Frob}=10)$ by applying the Algorithm \ref{alg:fixedmultiplicity-Frobenius}.
  \begin{itemize}
  \item $Ap(D(4,10),4)=\{0,11,13,14\}$.
  \item $A=\{D(4,10)\}$ and $B=\{D(4,10)\}$.
  \item $\lambda(D(4,10))=\{7, 9\}$.
  \item $C=\{D(4,10)\cup\{7\}, D(4,10)\cup\{9\}\}$.
  \item $A=\{D(4,10), D(4,10)\cup\{7\}, D(4,10)\cup\{9\}\}$ and  $B=\{D(4,10)\cup\{7\}, D(4,10)\cup\{9\}\}$.
  \item $Ap(D(4,10)\cup\{7\},4)$=$\{0,7,13,14\}$ and $Ap(D(4,10)\cup\{9\},4)$=$\{0,9,11,14\}$.
  \item $\lambda(D(4,10)\cup\{7\})=\emptyset$ and  $\lambda(D(4,10)\cup\{9\})=\emptyset$.
  \item So the Algorithm  \ref{alg:fixedmultiplicity-Frobenius} finishes here and it returns
    \[\mathscr{I}(\text{mul}=4,\text{ Frob}=10)=\{D(4,10), D(4,10)\cup\{7\}, D(4,10)\cup\{9\}\}.\]    
  \end{itemize}
\end{example}

   Performing computations by applying the Algorithm  \ref{alg:fixedmultiplicity-Frobenius}, we attain the tables \ref{tab:fixedmultiplicity-F14} and \ref{tab:fixedFrobenius-m5}, where $ins_{Fm}$ denotes the number of internal numerical semigroups with
Frobenius number $F$ and multiplicity $m$, $lns_{Fm}$ the number of leaf numerical semigroups with Frobenius
number $F$ and multiplicity $m$, and $ns_{Fm}$ is the number of numerical semigroups with Frobenius number $F$ and multiplicity $m$, is to say, $ns_{Fm} = ins_{Fm} + lns_{Fm}$.

\begin{table}[H]
\begin{center}
  \begin{tabular}{|c c c c | c c c|}
    \hline
  \multicolumn{7}{ |c| }{Numerical semigroups with Frobenius number equals to 14} \\ \hline  
$m$ &$ins_{Fm}$ & $lns_{Fm}$ & $ns_{Fm}$ & $ins_F$ & $lns_F$ & $ns_F$ \\ \hline   
2  & 0 & 0 & 0 &  &  &   \\
3  &3  &0  &3  &  &  &   \\
4  &5  &1  &6  &  &  &   \\
5  &12 &2  &14 &  &  &   \\
6  &14 &2  &16 &  &  &   \\
7  & 0 & 0 & 0 &  &  &   \\
8  &32 &0  &32 &  &  &   \\
9  &16 &0  &16 &98&5 &103\\
10 &8  &0  &8  &  &  &   \\
11 &4  &0  &4  &  &  &   \\
12 &2  &0  &2  &  &  &   \\
13 &1  &0  &1  &  &  &   \\
14  & 0 & 0 & 0 &  &  &   \\
15 &1  &0  &1  &  &  &   \\ \hline
\end{tabular}
\caption{Computational results on the number of internal numerical semigroups for Frobenius number equals to 14 for all available multiplicities, where $F$ denotes the Frobenius number of the numerical semigroup, $m$ its multiplicity,  $ins_{Fm}$ is the number of internal numerical semigroups with Frobenius number $F$ and multiplicity $m$,  $lns_{Fm}$ the number of leaf numerical semigroups with Frobenius number $F$ and multiplicity $m$,  $ns_{Fm}$ the total number of numerical semigroups with Frobenius number $F$ and multiplicity $m$,  $ins_{F}$ the number of internal numerical semigroups with Frobenius numbers $F$,  $lns_{F}$ the number of leaf numerical semigroups with Frobenius number $F$ and $ns_F$ is the total number of numerical semigroups with Frobenius number $F$.}
\label{tab:fixedmultiplicity-F14}
\end{center}
\end{table}

\begin{remark}
  Algorithm  \ref{alg:fixedmultiplicity-Frobenius} is constructing the tree  $G(\mathscr{I}(\text{mul}=m,\text{ Frob}=F))$, (indeed all the algorithms provided are building a tree), so looking into the Table \ref{tab:fixedmultiplicity-F14} one can see that  $G(\mathscr{I}(\text{mul}=15,\text{ Frob}=14))$ is just a node, the root, so this question arises: In a tree with just one node, is the root an internal node or a leaf node?
  In this paper, when we speak about an internal node (respectively a leaf node) we are looking at the tree $G(\mathscr{S})$ formed by all numerical semigroups. It is to say, we are using the definitions provided in Section \ref{sec:intro} that state that an internal numerical semigroup is a numerical semigroup $S$ with $\mu(S)\not = 0$, respectively a leaf numerical semigroup has $\mu(S) = 0$.
\end{remark}

\begin{table}[H]
\begin{center}
\begin{tabular}{|c c c c ||c c c c|}
\hline  
  \multicolumn{8}{ |c| }{Numerical semigroups with multiplicity equals to 5} \\ 
\hline
  $F$ & $ins_{Fm}$ & $lns_{Fm}$ & $ns_{Fm}$ & $F$ & $ins_{Fm}$ & $lns_{Fm}$ & $ns_{Fm}$  \\ \hline
 
  4 & 1 & 0 & 1 &29&57&20&77\\ 
  5 & 0 & 0 & 0 &30&0 &0 &0  \\
  6 & 1 & 0 & 1 &31&57&21&78 \\
  7 & 2 & 0 & 2 &32&69&9 &78  \\
  8 & 4 & 0 & 4 &33&73&24&97  \\
  9 & 5 & 3 & 8 &34&88&10 &98\\
 10 & 0 & 0 & 0 &35&0 &0 &0  \\
11  &6  & 2 &8  &36&89&9 &98 \\
12  &6  &2  &8  &37&96&30&126  \\
13  &9  &5  &14 &38&112&10&122  \\
14  &12 &2  &14 &39&119&33&152 \\
15  &0  &0  &0  &40&0 &0 &0  \\ 
16  &11 &3  &14 &41&121&35&156 \\
17  &15 &7  &22 &42&141&13 &154 \\
18  &18 &2  &20 &43&145&42 &187 \\
19  &21 &10 &31 &44&173&14 &187  \\
20  &0  &0  &0  &45&0  &0 & 0 \\
21  &22 &9  &31 &46&172&17&189  \\
22  &26 &6  &32 &47&183&46&229  \\
23  &30 &13 &43 &48&208&14&222   \\
24  &38  &4 &42 &49&216&54&270  \\
25  &0   &0 &0  &50&0  &0 &0  \\ 
26  &38  &6 &44 &51&219&52 &271 \\
27  &43  &15&58 &52&251&22 &273  \\
28  &51  &6 &57 &  &  & &  \\
\hline
\end{tabular}
\caption{Computational results on the number of numerical semigroups with multiplicity equals to $5$ up to Frobenius number $52$, where $F$ denotes the Frobenius number of the numerical semigroup, $m$ its multiplicity,  $ins_{Fm}$ is the number of internal numerical semigroups with Frobenius number $F$ and multiplicity $m$,  $lns_{Fm}$ the number of leaf numerical semigroups with Frobenius number $F$ and multiplicity $m$ and $ns_{Fm}$ the total number of numerical semigroups with Frobenius number $F$ and multiplicity $m$.}
\label{tab:fixedFrobenius-m5}
\end{center}
\end{table}

From the information depicted in the tables \ref{tab:fixedmultiplicity-F14} and \ref{tab:fixedFrobenius-m5} one can point out the following conjecture:

\begin{conjecture}
Let $m$ and $F$ be two integer numbers such that  $2\leq m\leq F+1$ and $m\not | F$, then  $\#\mathscr{L}(\text{mul}=m,\text{ Frob}=F)\leq \#\mathscr{I}(\text{mul}=m,\text{ Frob}=F)$.
\end{conjecture}

Also, from the Table  \ref{tab:fixedmultiplicity-F14} we can establish the next result:
\begin{proposition}\label{Prop:34}
  Let $F$ be an integer number such that  $F\geq 4$ and $\left\lceil \frac{F}{2} + 1 \right\rceil \leq m < F$, then
  \[ \#\mathscr{I}(\text{mul}=m,\text{ Frob}=F) = 2^{F-m-1}, \]
  and,
  \[\#\mathscr{L}(\text{mul}=m,\text{ Frob}=F)=0.\]

  Where $\left\lceil q \right\rceil$ is the ceiling function of $q$, which returns the least integer greater than or equal to the rational number $q$.
\end{proposition}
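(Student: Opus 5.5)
The plan is to describe every numerical semigroup with multiplicity $m$ and Frobenius number $F$ explicitly when $m$ is this large, then check that each one is internal. Two elementary facts drive everything. First, the hypothesis $\left\lceil \frac{F}{2}+1\right\rceil\leq m$ gives $2m\geq F+2$. Second, $m<F$ together with $2m>F$ forces $m\nmid F$, so the pair $(m,F)$ is admissible.

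\textbf{Step 1 (parametrisation).} Any numerical semigroup $S$ with $m(S)=m$ and $F(S)=F$ has the form
\[ S=\{0,m\}\cup A\cup\{F+1,\to\}, \qquad A\subseteq\{m+1,\ldots,F-1\}. \]
Conversely, fix any $A\subseteq\{m+1,\ldots,F-1\}$ and let $S$ be the set above. The sum of two nonzero elements of $S$ is at least $2m\geq F+2>F$, so it already lies in $S$. Hence $S$ is closed under addition, and it has cofinite complement. Its multiplicity is $m$, and its Frobenius number is $F$ because $F\notin S$ while everything above $F$ is in $S$. This gives a bijection between such semigroups and subsets of a set with $F-m-1$ elements, so there are exactly $2^{F-m-1}$ of them.

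\textbf{Step 2 (every such semigroup is internal).} I claim that $F+1\in\text{msg}(S)$ for each $S$ from Step 1. Suppose instead that $F+1=a+b$ with $a,b\in S\backslash\{0\}$. Then $a,b\geq m$, so $F+1\geq 2m\geq F+2$, which is a contradiction. Since $F+1>F(S)$, this gives $\mu(S)\geq 1$, so $S\in\mathscr{I}$. Consequently
\[ \#\mathscr{I}(\text{mul}=m,\text{ Frob}=F)=2^{F-m-1} \quad\text{and}\quad \#\mathscr{L}(\text{mul}=m,\text{ Frob}=F)=0. \]

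There is no real obstacle here. The only point requiring care is the inequality $2m\geq F+2$, which is exactly what the ceiling hypothesis encodes. It is used twice: in Step 1 to show closure under addition is automatic, and in Step 2 to show $F+1$ is a minimal generator. The condition $F\geq 4$ only guarantees that the range of admissible $m$ is nonempty.
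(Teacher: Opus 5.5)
Your proposal is correct and takes the same route as the paper. Like the paper, you write every semigroup with multiplicity $m$ and Frobenius number $F$ as $\{0,m\}\cup A\cup\{F+1,\to\}$ with $A\subseteq\{m+1,\ldots,F-1\}$, and then note that $F+1$ is always a minimal generator greater than $F$. You also make explicit the inequality $2m\geq F+2$, which the paper's sketch leaves implicit: it is what justifies both closure under addition and the minimality of $F+1$, whereas the paper only cites $m>\frac{F}{2}$.
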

\begin{proof}
  Let $F$ be an integer number such that  $F\geq 4$ and  $\left\lceil \frac{F}{2} + 1 \right\rceil \leq m < F$. Then, the smallest numerical semigroup S with multiplicity $m$ and Frobenius number $F$ has the form (as $m>\frac{F}{2}$)
  \[S=\{0,m,F+1,F+2,\to\}.\]

  So all the numerical semigroups with multiplicity $m$ and Frobenius number $F$ can be obtained from $S$ by adding elements from $A=\{m+1, m+2,\ldots, F-2, F-1\}$. Also, all this numerical semigroups are internal as $F+1$ belongs always to the minimal system of generators of the semigroup, so $\mu\not=0$. As $\#\mathcal{P}(A)=2^{F-1-(m+1)+1}=2^{F-m-1}$, therefore we get that $\#\mathscr{I}(\text{mul}=m,\text{ Frob}=F) = 2^{F-m-1}$.
So, in this case where all the numerical semigroups are internal, we get that $\#\mathscr{L}(\text{mul}=m,\text{ Frob}=F)=0$.
\end{proof}

As a direct consequence of Proposition \ref{Prop:34} we get the next corollary:
\begin{corollary}
  Let $F$ be an integer number such that  $F\geq 4$, $\left\lceil \frac{F}{2}\right\rceil \leq m < F$ and  $m\not | F$, then the total number of numerical semigroups with multiplicity $m$ and Frobenius number F is
  \[ ns_{Fm} = 2^{F-m-1}. \]
\end{corollary}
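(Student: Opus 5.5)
The plan is to reduce to Proposition \ref{Prop:34} and treat directly the one value of $m$ that the corollary adds. Since $\left\lceil \frac{F}{2}+1\right\rceil=\left\lceil \frac{F}{2}\right\rceil+1$, the hypotheses of the corollary cover exactly the range of Proposition \ref{Prop:34} together with the extra value $m=\left\lceil \frac{F}{2}\right\rceil$.

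For $m$ in the range of Proposition \ref{Prop:34}, I would first note that every numerical semigroup is either internal or a leaf. Hence
\[ ns_{Fm}=\#\mathscr{I}(\text{mul}=m,\text{ Frob}=F)+\#\mathscr{L}(\text{mul}=m,\text{ Frob}=F)=2^{F-m-1}+0. \]

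For the extra value $m=\left\lceil \frac{F}{2}\right\rceil$, first suppose $F$ is even. Then $m=F/2$ divides $F$, which the hypothesis $m\nmid F$ excludes. So $F$ is odd and $m=\frac{F+1}{2}$, which gives $2m=F+1$. I would then rerun the argument of Proposition \ref{Prop:34}, which only needs $2m>F$.

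Concretely, I would show that the numerical semigroups with multiplicity $m$ and Frobenius number $F$ are exactly the sets
\[ T_B=\{0,m\}\cup B\cup\{F+1,\to\}, \qquad B\subseteq\{m+1,\ldots,F-1\}. \]
One direction is clear. Such a semigroup contains $0$ and $m$, contains nothing in $\{1,\ldots,m-1\}$, does not contain $F$, and contains everything above $F$; so it is some $T_B$.

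Conversely, each $T_B$ is closed under addition, because any sum of two nonzero elements is at least $2m\geq F+1$. Its complement in $\mathbb{N}$ is finite, its least nonzero element is $m$, and its largest gap is $F$. Distinct subsets $B$ give distinct semigroups. There are $F-m-1$ integers in $\{m+1,\ldots,F-1\}$, so the count is $2^{F-m-1}$. As a consistency check, $F+1$ is always a minimal generator, since it cannot be a sum of two nonzero elements below it; so all of these semigroups are internal, in agreement with Proposition \ref{Prop:34}.

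There is no serious obstacle. The only points needing care are checking that the inequality $2m\geq F+1$ still holds in the boundary case, and noticing that the even-$F$ boundary case is ruled out by the hypothesis $m\nmid F$.
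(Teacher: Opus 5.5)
Your argument is correct and is essentially the paper's. The paper calls the corollary a direct consequence of Proposition~\ref{Prop:34}, whose proof is exactly your count of subsets of $\{m+1,\ldots,F-1\}$ and uses only $m>F/2$. You are more careful than the paper: you notice that the extra value $m=\frac{F+1}{2}$ ($F$ odd; even $F$ is excluded by $m\nmid F$) is not literally covered by the proposition, and you rerun the count there.

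One correction: your closing ``consistency check'' is false in exactly the boundary case you are treating. There $2m=F+1$, so $F+1=m+m$ is not a minimal generator, and some of the $T_B$ are leaves. For instance, $\{0,3,4,6,\to\}=\langle 3,4\rangle$ has $m=3=\lceil 5/2\rceil$, $F=5$ and $\mu=0$. The paper's tables also give $lns_{Fm}=1,1,3$ for $(F,m)=(5,3),(7,4),(9,5)$. This does not affect the count of $ns_{Fm}$, which is all the corollary asserts. It is also why the corollary is stated for $ns_{Fm}$ rather than for internal semigroups. You should delete that remark.
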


\section{Conclusions}
In this work, we have studied the tree of  numerical semigroups, by classifying a numerical semigroup as internal or leaf, depending whether the numerical semigroup is an internal or a leaf node in the tree.
To perform this analysis several algorithms have been provided to compute the internal numerical semigroups by keeping fixed one of the three invariants, is to say, multiplicity, Frobenius number or genus. In all the three cases, we have implemented the algorithms in C++ to compute tables which has allowed us to study the behaviour of the number of numerical semigroups with the corresponding invariant fixed. We have observed that in all the situations there are more internal than leaf numerical semigroups.

Besides, in the last section we have fixed multiplicity and Frobenius number simultaneously  and the results obtained are analogous than fixing just one invariant, but in this last section we have found a closed formula to count the exact number of numerical semigroups for some values of Frobenius number and multiplicity. Indeed we have obtained that in those cases all the numerical semigroups are internal nodes of the tree.

\begin{sloppypar} 
The source code of the implementation of the algorithms in C++ is available at the next URL: 

\url{https://drive.google.com/drive/folders/1cF_bWf94JGJW2p24mN1Mcl7ACmWTljIa?usp=sharing} \; \; .
\end{sloppypar} 

\section*{Acknowledgement}

This work was supported by a Departmental Collaboration Grant from the Spanish Ministry of Education, Vocational Training and Sports (2025/2026 call). The author Mario Casas gratefully acknowledge the financial support provided by this Departmental Collaboration Grant.

\bibliographystyle{unsrt}
\bibliography{../../Referencias}
\clearpage

\pagenumbering{roman}
\appendix

\section{Table for internal numerical semigroups with fixed multiplicity}

   Performing computations by applying the Algorithm  \ref{alg:fixedmultiplicity}, we attain the Table \ref{tab:fixedmultiplicity}, where $ins_{gm}$ denotes the number of internal numerical semigroups with
genus $g$ and multiplicity $m$, $lns_{gm}$ the number of leaf numerical semigroups with genus $g$ and multiplicity $m$, and $ns_{gm}$ is the number of numerical semigroups with genus $g$ and multiplicity $m$, is to say, $ns_{gm} = ins_{gm} + lns_{gm}$. Besides, $ins_g$ symbolises the
number of internal numerical semigroups with genus $g$, $lns_g$ the
number of leaf numerical semigroups with genus $g$ and $ns_g$ is the
total number of numerical semigroups with genus $g$.

\begin{center}
  \begin{longtable}{|c c c c c | c c c|}
\caption{\label{tab:fixedmultiplicity}  Computational results on the number of internal numerical semigroups up to genus 26 for all available multiplicities, where $g$ denotes the genus of the numerical semigroup, $m$ its multiplicity,  $ins_{gm}$ is the number of internal numerical semigroups with genus $g$ and multiplicity $m$,  $lns_{gm}$ the number of leaf numerical semigroups with genus $g$ and multiplicity $m$,  $ns_{gm}$ the total number of numerical semigroups with genus $g$ and multiplicity $m$,  $ins_{g}$ the number of internal numerical semigroups with genus $g$,  $lns_{g}$ the number of leaf numerical semigroups with genus $g$ and $ns_g$ is the total number of numerical semigroups with genus $g$.}    \\

\hline
  \multicolumn{1}{|c}{$g$} & \multicolumn{1}{c}{$m$} & \multicolumn{1}{c}{$ins_{gm}$} & \multicolumn{1}{c}{$lns_{gm}$} & \multicolumn{1}{c|}{$ns_{gm}$} & \multicolumn{1}{c}{$ins_g$} & \multicolumn{1}{c}{$lns_g$} & \multicolumn{1}{c|}{$ns_g$} \\ \hline
  \endfirsthead

\hline \multicolumn{8}{|c|}%
{{\bfseries \tablename\ \thetable{} -- continued from previous page}} \\
\hline
  \multicolumn{1}{|c}{$g$} & \multicolumn{1}{c}{$m$} & \multicolumn{1}{c}{$ins_{gm}$} & \multicolumn{1}{c}{$lns_{gm}$} & \multicolumn{1}{c|}{$ns_{gm}$} & \multicolumn{1}{c}{$ins_g$} & \multicolumn{1}{c}{$lns_g$} & \multicolumn{1}{c|}{$ns_g$} \\ \hline
\endhead

\hline \multicolumn{8}{|r|}{\textbf{Continued on next page}} \\ \hline
\endfoot

\hline
\endlastfoot
  
  0 & 1 & 1 & 0 & 1 & 1 & 0 & 1\\ \hline
  1 & 2 & 1 & 0 & 1 & 1 & 0 & 1 \\ \hline
    & 2 & 1 & 0 & 1 &  &  &  \\ 
  2 & 3 & 1 & 0 & 1 & 2 & 0 & 2 \\ \hline
    & 2 & 1 & 0 & 1 &  &  &  \\
  3 & 3 & 1 & 1 & 2 & 3 & 1 & 4 \\
    & 4 & 1 & 0 & 1 &  &  &  \\ \hline
    & 2 & 1 & 0 & 1 &  &  &  \\
  4 & 3 & 1 & 1 & 2 & 5 & 2 & 7  \\
    & 4 & 2 & 1 & 3 &  &  &  \\
    & 5 & 1 & 0 & 1 &  &  &  \\ \hline
    & 2 & 1 & 0 & 1 &  &  &   \\
    & 3 & 2 & 0 & 2 &  &  &   \\
  5 & 4 & 3 & 1 & 4 & 10 & 2 & 12  \\
    & 5 & 3 & 1 & 4 &  &  &   \\
    & 6 & 1 & 0 & 1 &  &  &   \\ \hline
    & 2 & 1 & 0 & 1 &  &  &   \\
    & 3 & 2 & 1 & 3 &  &  &   \\
  6 & 4 & 3 & 3 & 6 & 15 & 8  & 23  \\
    & 5 & 4 & 3 & 7 &  &  &   \\
    & 6 & 4 & 1 & 5 &  &  &   \\
    & 7 & 1 & 0 & 1 &  &  &   \\ \hline
   & 2 & 1 & 0 & 1 &  &  &   \\
   & 3 & 2 & 1 & 3 &  &  &   \\
   & 4 & 5 & 2 & 7 &  &  &   \\
  7 & 5 & 6 & 4 & 10 & 27 & 12 & 39  \\
    & 6 & 7 & 4 & 11 &  &  &   \\
    & 7 & 5 & 1 & 6 &  &  &   \\
    & 8 & 1 & 0 & 1 &  &  &   \\ \hline
    & 2 & 1 & 0 & 1 &  &  &   \\
    & 3 & 3 & 0 & 3 &  &  &   \\
    & 4 & 6 & 3 & 9 &  &  &   \\
  8 & 5 & 7 & 6 & 13 & 47 & 20 & 67  \\
    & 6 & 12& 5 & 17&  &  &   \\
    & 7 & 11& 5 & 16 &  &  &   \\
    & 8 & 6 & 1 & 7 &  &  &   \\
    & 9 & 1 & 0 & 1 &  &  &   \\ \hline
   & 2  & 1 & 0 & 1 &  &  &   \\
   & 3  & 3 & 1 & 4 &  &  &   \\
   & 4  & 8 & 3 & 11&  &  &   \\
   & 5  & 12& 4 & 16&  &  &   \\
 9 & 6  & 16& 11& 27& 81 & 37  & 118  \\
   & 7  & 17& 11& 28&  &  &   \\
   & 8  & 16& 6 & 22&  &  &   \\
   & 9  & 7 & 1 & 8 &  &  &   \\
   & 10 & 1 & 0 & 1 &  &  &   \\ \hline
   & 2 & 1 & 0 & 1 &  &  &   \\
   & 3 & 3 & 1 & 4 &  &  &   \\
   & 4 & 9 & 4 & 13&  &  &   \\
   & 5 & 11& 11& 22&  &  &   \\
10 & 6 & 22& 15& 37& 132 & 72 & 204  \\
   & 7 & 27& 17& 44&  &  &   \\
   & 8 & 28& 16& 44&  &  &   \\
   & 9 & 22&  7& 29&  &  &   \\
   & 10& 8 & 1 & 9 &  &  &   \\
   & 11& 1 & 0 & 1 &  &  &   \\ \hline
   & 2 & 1 & 0 & 1 &  &  &   \\
   & 3 & 4 & 0 & 4 &  &  &   \\
   & 4 & 11& 4 & 15&  &  &   \\
   & 5 & 18& 6 & 24&  &  &   \\
   & 6 & 31& 18& 49&  &  &   \\
11 & 7 & 38& 26& 64& 233 & 110 & 343  \\
   & 8 & 47& 25& 72&  &  &   \\
   & 9 & 44& 22& 66&  &  &   \\
   & 10& 29& 8 & 37&  &  &   \\
   & 11& 9 & 1 & 10&  &  &   \\
   & 12& 1 & 0 & 1 &  &  &   \\ \hline
   & 2 & 1 & 0 & 1 &  &  &   \\
   & 3 & 4 & 1 & 5 &  &  &   \\
   & 4 & 13& 5 & 18&  &  &   \\
   & 5 & 18& 14& 32&  &  &   \\
   & 6 & 39& 27& 66&  &  &   \\
12 & 7 & 49& 36& 85& 381 & 211 & 592   \\
   & 8 & 71& 45&116&  &  &   \\
   & 9 & 72& 44&116&  &  &   \\
   & 10& 66& 29& 95&  &  &   \\
   & 11& 37& 9 & 46&  &  &   \\ 
   & 12& 10& 1 & 11&  &  &   \\ 
   & 13& 1 & 0 & 1 &  &  &   \\ \hline 
   & 2 & 1 & 0 & 1 &  &  &   \\ 
   & 3 & 4 & 1 & 5  &  &  &   \\ 
   & 4 & 15& 5 & 20 &  &  &   \\ 
   & 5 & 24& 11& 35 &  &  &   \\ 
   & 6 & 53& 32& 85 &  &  &   \\ 
   & 7 & 66& 46& 112&  &  &   \\ 
13 & 8 &102& 70& 172& 651 & 350 & 1001  \\ 
   & 9 &117& 71& 188&  &  &   \\ 
   & 10&116& 66& 182&  &  &   \\ 
   & 11& 95& 37& 132&  &  &   \\ 
   & 12& 46& 10& 56 &  &  &   \\ 
   & 13& 11& 1 & 12 &  &  &   \\ 
   & 14&  1& 0 & 1  &  &  &   \\ \hline 
   & 2 & 1 & 0 & 1 &  &  &   \\ 
   & 3 & 5 & 0 & 5 &  &  &   \\ 
   & 4 &18 & 5 & 23&  &  &   \\ 
   & 5 &30 &13 & 43&  &  &   \\ 
   & 6 &68 &38 &106&  &  &   \\ 
   & 7 &90 &58 &148&  &  &   \\ 
14 & 8 &141&98 &239&1103 & 590 & 1693  \\ 
   & 9 &173&115&288&  &  &   \\ 
   & 10&194&110&304&  &  &   \\ 
   & 11&182&95 &277&  &  &   \\ 
   & 12&132&46 &178&  &  &   \\ 
   & 13&56 &11 & 67&  &  &   \\ 
   & 14&12 & 1 & 13&  &  &   \\ 
   & 15&  1& 0 & 1 &  &  &   \\ \hline
   & 2 & 1 & 0 & 1 &  &  &   \\ 
   & 3 & 5 & 1 & 6 &  &  &   \\ 
   & 4 &20 & 6 &26  &  &  &   \\ 
   & 5 & 37&14 &51  &  &  &   \\ 
   & 6 &83 &50 &133 &  &  &   \\ 
   & 7 &109&82 &191 &  &  &   \\ 
   & 8 &198&127&325 &  &  &   \\ 
15 & 9 &240&169&409 &1836 & 1021 & 2857  \\ 
   &10 &303&189&492 &  &  &   \\ 
   &11 &304&182&486 &  &  &   \\ 
   &12 &277&132&409 &  &  &   \\ 
   &13 &178&56 &234 &  &  &   \\ 
   &14 &67 &12 &79  &  &  &   \\ 
   &15 &13 & 1 &14  &  &  &   \\ 
   &16 &1  & 0 &1   &  &  &   \\ \hline
   & 2 &1  &0  &1  &  &  &   \\ 
   & 3 &5  &1  &6  &  &  &   \\ 
   & 4 &22 &7  &29 &  &  &   \\ 
   & 5 &40 &21 &61 &  &  &   \\ 
   & 6 &101&62 &163&  &  &   \\ 
   & 7 &144&93 &237&  &  &   \\ 
   & 8 &256&185&441&  &  &   \\ 
16 & 9 &322&237&559&3064 &1742  & 4806  \\ 
   &10 &455&299&754&  &  &   \\ 
   &11 &495&301&796&  &  &   \\ 
   &12 &486&277&763&  &  &   \\ 
   &13 &409&178&587&  &  &   \\ 
   &14 &234&67 &301&  &  &   \\ 
   &15 &79 &13 &92 &  &  &   \\ 
   &16 &14 &1  &15 &  &  &   \\ 
   &17 &1  &0  & 1 &  &  &   \\ \hline
   &2  &1  &0  &1   &  &  &   \\ 
   &3  &6  &0  &6   &  &  &   \\ 
   &4  &26 &6  &32  &  &  &   \\ 
   &5  &50 &18 &68  &  &  &   \\ 
   &6  &126&70 &196 &  &  &   \\ 
   &7  &181&120&301 &  &  &   \\ 
   &8  &337&236&573 &  &  &   \\ 
   &9  &449&301&750 &  &  &   \\ 
17 &10 &647&447&1094&5144&2901& 8045  \\ 
   &11 &756&490&1246&  &  &   \\ 
   &12 &806&476&1282&  &  &   \\ 
   &13 &763&409&1172&  &  &   \\ 
   &14 &587&234&821 &  &  &   \\ 
   &15 &301&79 &380 &  &  &   \\ 
   &16 &92 &14 &106 &  &  &   \\ 
   &17 &15 &1  &16  &  &  &   \\ 
   &18 &1  &0  &1   &  &  &   \\ \hline
   &2 &1   &0  &1   &  &  &   \\ 
   &3 &6   &1  &7   &  &  &   \\ 
   &4 &28  &8  &36  &  &  &   \\ 
   &5 &55  &25 &80  &  &  &   \\ 
   &6 &154 &82 &236 &  &  &   \\ 
   &7 &217 &152&369 &  &  &   \\ 
   &8 &436 &301&737 &  &  &   \\ 
   &9 &585 &430&1015&  &  &   \\ 
18 &10&904 &630&1534&8540&4927& 13467  \\ 
   &11&1094&747&1841&  &  &   \\ 
   &12&1282&792&2074&  &  &   \\ 
   &13&1282&763&2045&  &  &   \\ 
   &14&1172&587&1759&  &  &   \\ 
   &15&821 &301&1122&  &  &   \\ 
   &16&380 &92 &472 &  &  &   \\ 
   &17&106 &15 &121 &  &  &   \\ 
   &18&16  &1  &17  &  &  &   \\ 
   &19&1   &0  &1   &  &  &   \\ \hline
   &2  &1   &0   &1   &  &  &   \\ 
   &3  &6   &1   &7   &  &  &   \\ 
   &4  &32  &7   &39  &  &  &   \\ 
   &5  &67  &22  &89  &  &  &   \\ 
   &6  &181 &101 &282 &  &  &   \\ 
   &7  &273 &171 &444 &  &  &   \\ 
   &8  &564 &381 &945 &  &  &   \\ 
   &9  &777 &557 &1334&  &  &   \\ 
   &10 &1228&878 &2106&  &  &   \\ 
19 &11 &1516&1085&2601&14220&8244&22464 \\ 
   &12 &1957&1270&3227&  &  &   \\ 
   &13 &2081&1275&3356&  &  &   \\ 
   &14 &2045&1172&3217&  &  &   \\ 
   &15 &1759&821 &2580&  &  &   \\ 
   &16 &1122&380 &1502&  &  &   \\ 
   &17 &472 &106 &578 &  &  &   \\ 
   &18 &121 &16  &137 &  &  &   \\ 
   &19 &17  &1   &18  &  &  &   \\ 
   &20 & 1  &0   & 1  &  &  &   \\ \hline
   &2 &1   &0   &1   &  &  &   \\ 
   &3 &7   &0   &7   &  &  &   \\ 
   &4 &35  &8   &43  &  &  &   \\ 
   &5 &75  &29  &104 &  &  &   \\ 
   &6 &220 &110 &330 &  &  &   \\ 
   &7 &345 &196 &541 &  &  &   \\ 
   &8 &711 &482 &1193&  &  &   \\ 
   &9 &1007&730 &1737&  &  &   \\ 
   &10&1667&1173&2840&  &  &   \\ 
20 &11&2067&1494&3561&23645&13751&37396 \\ 
   &12&2872&1940&4812&  &  &   \\ 
   &13&3229&2072&5301&  &  &   \\ 
   &14&3376&2025&5401&  &  &   \\ 
   &15&3217&1759&4976&  &  &   \\ 
   &16&2580&1122&3702&  &  &   \\ 
   &17&1502&472 &1974&  &  &   \\ 
   &18&578 &121 &699 &  &  &   \\ 
   &19&137 &17  &154 &  &  &   \\ 
   &20&18  &1   &19  &  &  &   \\ 
   &21&1   &0   & 1  &  &  &   \\ \hline
   &2 &1   &0   &1   &  &  &   \\ 
   &3 &7   &1   &8   &  &  &   \\ 
   &4 &38  &9   &47  &  &  &   \\ 
   &5 &90  &25  &115 &  &  &   \\ 
   &6 &266 &124 &390 &  &  &   \\ 
   &7 &408 &250 &658 &  &  &   \\ 
   &8 &889 &601 &1490&  &  &   \\ 
   &9 &1313&918 &2231&  &  &   \\ 
   &10&2199&1594&3793&  &  &   \\ 
   &11&2805&2017&4822&  &  &   \\ 
21 &12&4099&2840&6939&39235&22959&62194 \\ 
   &13&4814&3206&8020&  &  &   \\ 
   &14&5380&3341&8721&  &  &   \\ 
   &15&5401&3217&8618&  &  &   \\ 
   &16&4976&2580&7556&  &  &   \\ 
   &17&3702&1502&5204&  &  &   \\ 
   &18&1974&578 &2552&  &  &   \\ 
   &19&699 &137 &836 &  &  &   \\ 
   &20&154 &18  &172  &  &  &   \\ 
   &21&19  &1   &20   &  &  &   \\ 
   &22&1   &0   &1    &  &  &   \\ \hline
   &2 &1   &0   &1    &  &  &   \\ 
   &3 &7   &1   &8    &  &  &   \\ 
   &4 &42  &9   &51   &  &  &   \\ 
   &5 &94  &39  &133  &  &  &   \\ 
   &6 &307 &149 &456  &  &  &   \\ 
   &7 &501 &283 &784  &  &  &   \\ 
   &8 &1094&753 &1847 &  &  &   \\ 
   &9 &1651&1200&2851 &  &  &   \\ 
   &10&2877&2090&4967 &  &  &   \\ 
   &11&3761&2729&6490 &  &  &   \\ 
22 &12&5702&4050&9752 &64890&38356&103246 \\ 
   &13&6865&4792&11657&  &  &   \\ 
   &14&8294&5349&13643&  &  &   \\ 
   &15&8735&5387&14122&  &  &   \\ 
   &16&8618&4976&13594&  &  &   \\ 
   &17&7556&3702&11258&  &  &   \\ 
   &18&5204&1974&7178 &  &  &   \\ 
   &19&2552&699 &3251 &  &  &   \\ 
   &20&836 &154 &990  &  &  &   \\ 
   &21&172 &19  &191  &  &  &   \\ 
   &22&20  &1   &21   &  &  &   \\ 
   &23&1   &0   &1    &  &  &   \\ \hline
   &2 &1    &0   &1    &  &  &   \\ 
   &3 &8    &0   &8    &  &  &   \\ 
   &4 &46   &9   &55   &  &  &   \\ 
   &5 &113  &30  &143  &  &  &   \\ 
   &6 &360  &165 &525  &  &  &   \\ 
   &7 &602  &338 &940  &  &  &   \\ 
   &8 &1365 &897 &2262 &  &  &   \\ 
   &9 &2100 &1482&3582 &  &  &   \\ 
   &10&3732 &2694&6426 &  &  &   \\ 
   &11&4981 &3657&8638 &  &  &   \\ 
   &12&7843 &5596&13439&  &  &   \\ 
23 &13&9557 &6818&16375&107403&63560&170963 \\ 
   &14&12394&8246&20640&  &  &   \\ 
   &15&13658&8706&22364&  &  &   \\ 
   &16&14159&8581&22740&  &  &   \\ 
   &17&13594&7556&21150&  &  &   \\ 
   &18&11258&5204&16462&  &  &   \\ 
   &19&7178 &2552&9730 &  &  &   \\ 
   &20&3251 &836 &4087 &  &  &   \\ 
   &21&990  &172 &1162 &  &  &   \\ 
   &22&191  &20  &211  &  &  &   \\ 
   &23&21   &1   &22   &  &  &   \\ 
   &24&1    &0   &1    &  &  &   \\ \hline
   &2 &1    &0    &1    &  &  &   \\ 
   &3 &8    &1    &9    &  &  &   \\ 
   &4 &50   &10   &60   &  &  &   \\ 
   &5 &124  &40   &164  &  &  &   \\ 
   &6 &419  &189  &608  &  &  &   \\ 
   &7 &713  &401  &1114 &  &  &   \\ 
   &8 &1681 &1085 &2766 &  &  &   \\ 
   &9 &2590 &1885 &4475 &  &  &   \\ 
   &10&4807 &3445 &8252 &  &  &   \\ 
   &11&6545 &4810 &11355&  &  &   \\ 
   &12&10653&7656 &18309&  &  &   \\ 
24 &13&13034&9484 &22518&177349&105479&282828\\ 
   &14&17968&12314&30282&  &  &   \\ 
   &15&20644&13608&34252&  &  &   \\ 
   &16&22547&14067&36614&  &  &   \\ 
   &17&22740&13594&36334&  &  &   \\ 
   &18&21150&11258&32408&  &  &   \\ 
   &19&16462&7178 &23640&  &  &   \\ 
   &20&9730 &3251 &12981&  &  &   \\ 
   &21&4087 &990  &5077 &  &  &   \\ 
   &22&1162 &191  &1353 &  &  &   \\ 
   &23&211  &21   &232  &  &  &   \\ 
   &24&22   &1    &23   &  &  &   \\ 
   &25&1    &0    &1    &  &  &   \\ \hline
   &2 &1    &0    &1    &  &  &   \\ 
   &3 &8    &1    &9    &  &  &   \\ 
   &4 &54   &10   &64   &  &  &   \\ 
   &5 &140  &41   &181  &  &  &   \\ 
   &6 &482  &215  &697  &  &  &   \\ 
   &7 &843  &464  &1307 &  &  &   \\ 
   &8 &2046 &1307 &3353 &  &  &   \\ 
   &9 &3245 &2267 &5512 &  &  &   \\ 
   &10&6124 &4395 &10519&  &  &   \\ 
   &11&8469 &6287 &14756&  &  &   \\ 
   &12&14261&10402&24663&  &  &   \\ 
   &13&17734&12854&30588&  &  &   \\ 
25 &14&25371&17863&43234&292551&174673&467224\\ 
   &15&30197&20575&50772&  &  &   \\ 
   &16&34880&22460&57340&  &  &   \\ 
   &17&36647&22707&59354&  &  &   \\ 
   &18&36334&21150&57484&  &  &   \\ 
   &19&32408&16462&48870&  &  &   \\ 
   &20&23640&9730 &33370&  &  &   \\ 
   &21&12981&4087 &17068&  &  &   \\ 
   &22&5077 &1162 &6239 &  &  &   \\ 
   &23&1353 &211  &1564 &  &  &   \\ 
   &24&232  &22   &254  &  &  &   \\ 
   &25&23   &1    &24   &  &  &   \\ 
   &26&1    &0    &1    &  &  &   \\ \hline
   &2 &1    &0    &1    &  &  &   \\ 
   &3 &9    &0    &9    &  &  &   \\ 
   &4 &58   &11   &69   &  &  &   \\ 
   &5 &156  &45   &201  &  &  &   \\ 
   &6 &555  &238  &793  &  &  &   \\ 
   &7 &998  &530  &1528 &  &  &   \\ 
   &8 &2473 &1562 &4035 &  &  &   \\ 
   &9 &4014 &2769 &6783 &  &  &   \\ 
   &10&7739 &5543 &13282&  &  &   \\ 
   &11&10933&8065 &18998&  &  &   \\ 
   &12&18892&13901&32793&  &  &   \\ 
   &13&23841&17470&41311&  &  &   \\ 
26 &14&35223&25177&60400&482133&288699&770832 \\ 
   &15&42927&30082&73009&  &  &   \\ 
   &16&52630&34759&87389&  &  &   \\ 
   &17&57383&36571&93954&  &  &   \\ 
   &18&59427&36261&95688&  &  &   \\ 
   &19&57484&32408&89892&  &  &   \\ 
   &20&48870&23640&72510&  &  &   \\ 
   &21&33370&12981&46351&  &  &   \\ 
   &22&17068&5077 &22145&  &  &   \\ 
   &23&6239 &1353 &7592 &  &  &   \\ 
   &24&1564 &232  &1796 &  &  &   \\ 
   &25&254  &23   &277  &  &  &   \\ 
   &26&24   &1    &25   &  &  &   \\ 
   &27&1    &0    &1    &  &  &   \\ 
\hline
\end{longtable}
\end{center}

\clearpage

\section{Table for internal numerical semigroups with fixed multiplicity and Frobenius number}

   Performing computations by applying the Algorithm  \ref{alg:fixedmultiplicity-Frobenius}, we attain the Table \ref{tab:fixedmultiplicity-Frobenius}, where $ins_{Fm}$ denotes the amount of internal numerical semigroups with
Frobenius number $F$ and multiplicity $m$, $lns_{Fm}$ the quantity of leaf numerical semigroups with Frobenius
number $F$ and multiplicity $m$, and $ns_{Fm}$ is the sum total of numerical semigroups with Frobenius number $F$ and multiplicity $m$, is to say, $ns_{Fm} = ins_{Fm} + lns_{Fm}$.

\begin{center}
  \begin{longtable}{|c c c c c | c c c|}
\caption{\label{tab:fixedmultiplicity-Frobenius}  Computational results on the amount of internal numerical semigroups up to Frobenius number 26 for all available multiplicities, where $F$ denotes the Frobenius number of the numerical semigroup, $m$ its multiplicity,  $ins_{Fm}$ is the quantity of internal numerical semigroups with Frobenius number $F$ and multiplicity $m$,  $lns_{Fm}$ the amount of leaf numerical semigroups with Frobenius number $F$ and multiplicity $m$,  $ns_{Fm}$ the sum total of numerical semigroups with Frobenius number $F$ and multiplicity $m$,  $ins_{F}$ the quantity of internal numerical semigroups with Frobenius numbers $F$,  $lns_{F}$ the count of leaf numerical semigroups with Frobenius number $F$ and $ns_F$ is the total of numerical semigroups with Frobenius number $F$.}    \\

\hline
  \multicolumn{1}{|c}{$F$} & \multicolumn{1}{c}{$m$} & \multicolumn{1}{c}{$ins_{Fm}$} & \multicolumn{1}{c}{$lns_{Fm}$} & \multicolumn{1}{c|}{$ns_{Fm}$} & \multicolumn{1}{c}{$ins_F$} & \multicolumn{1}{c}{$lns_F$} & \multicolumn{1}{c|}{$ns_F$} \\ \hline
  \endfirsthead

\hline \multicolumn{8}{|c|}%
{{\bfseries \tablename\ \thetable{} -- continued from previous page}} \\
\hline
  \multicolumn{1}{|c}{$F$} & \multicolumn{1}{c}{$m$} & \multicolumn{1}{c}{$ins_{Fm}$} & \multicolumn{1}{c}{$lns_{Fm}$} & \multicolumn{1}{c|}{$ns_{Fm}$} & \multicolumn{1}{c}{$ins_F$} & \multicolumn{1}{c}{$lns_F$} & \multicolumn{1}{c|}{$ns_F$} \\ \hline
\endhead

\hline \multicolumn{8}{|r|}{\textbf{Continued on next page}} \\ \hline
\endfoot

\hline
\endlastfoot
  
  1 & 2 & 1 & 0 & 1 & 1 & 0 & 1\\ \hline
  2 & 3 & 1 & 0 & 1 & 1 & 0 & 1 \\ \hline
  3 & 2 & 1 & 0 & 1 & 2 & 0 & 2 \\ 
    & 4 & 1 & 0 & 1 &   &   &  \\ \hline
  4 & 3 & 1 & 0 & 1 & 2 & 0 & 2 \\
    & 5 & 1 & 0 & 1 &  &  &  \\ \hline
    & 2 & 1 & 0 & 1 &  &  &   \\
  5 & 3 & 1 & 1 & 2 & 4&1 &5   \\
    & 4 & 1 & 0 & 1 &  &  &   \\
    & 6 & 1 & 0 & 1 &  &  &   \\ \hline
    & 4 & 2 & 0 & 2 &  &  &   \\
  6 & 5 & 1 & 0 & 1 &4 &0 &4  \\
    & 7 & 1 & 0 & 1 &  &  &   \\ \hline
    & 2 & 1 & 0 & 1 &  &  &   \\
    & 3 & 1 & 1 & 2 &  &  &   \\
  7 & 4 & 3 & 1 & 4 &9 &2 &11 \\
    & 5 & 2 & 0 & 2 &  &  &   \\
    & 6 & 1 & 0 & 1 &  &  &   \\
    & 8 & 1 & 0 & 1 &  &  &   \\ \hline
    & 3 & 2 & 0 & 2 &  &  &   \\
    & 5 & 4 & 0 & 4 &  &  &   \\
  8 & 6 & 2 & 0 & 2 &10&0 &10 \\
    & 7 & 1 & 0 & 1 &  &  &   \\
    & 9 & 1 & 0 & 1 &  &  &   \\ \hline
    & 2 & 1 & 0 & 1 &  &  &   \\
    & 4 & 3 & 1 & 4 &  &  &   \\
 9  & 5 & 5 & 3 & 8 &17&4 &21 \\
    & 6 & 4 & 0 & 4 &  &  &   \\
    & 7 & 2 & 0 & 2 &  &  &   \\
    & 8 & 1 & 0 & 1 &  &  &   \\
    & 10& 1 & 0 & 1 &  &  &   \\ \hline
    &3  & 2 & 0 & 2 &  &  &   \\
    &4  & 3 & 1 & 4 &  &  &   \\
    &6  & 8 & 0 & 8 &  &  &   \\
10  &7  & 4 & 0 & 4 &21&1 &22 \\
    &8  & 2 & 0 & 2 &  &  &   \\
    &9  & 1 & 0 & 1 &  &  &   \\
    &11 & 1 & 0 & 1 &  &  &   \\ \hline
    &2  &1  & 0 &1  &  &  &   \\
    &3  &2  & 1 &3  &  &  &   \\
    &4  &5  & 2 &7  &  &  &   \\
    &5  &6  & 2 &8  &  &  &   \\
11  &6  &12 & 4 &16 &42&9 &51 \\
    &7  &8  & 0 &8  &  &  &   \\
    &8  &4  & 0 &4  &  &  &   \\
    &9  &2  & 0 &2  &  &  &   \\
    &10 &1  & 0 &1  &  &  &   \\
    &12 &1  & 0 &1  &  &  &   \\\hline
    &5  &6  &2  &8  &  &  &   \\
    &7  &16 &0  &16 &  &  &   \\
    &8  &8  &0  &8  &  &  &   \\
12  &9  &4  &0  &4  &38&2 &40 \\
    &10 &2  &0  &2  &  &  &   \\
    &11 &1  &0  &1  &  &  &   \\
    &13 &1  &0  &1  &  &  &   \\ \hline
    &2  &1  &0  &1  &  &  &   \\ 
    &3  &2  &1  &3  &  &  &   \\
    &4  &6  &2  &8  &  &  &   \\
    &5  &9  &5  &14 &  &  &   \\
    &6  &13 &3  &16 &  &  &   \\
13  &7  &22 &10 &32 &85&21&106\\
    &8  &16 &0  &16 &  &  &   \\
    &9  &8  &0  &8  &  &  &   \\
    &10 &4  &0  &4  &  &  &   \\
    &11 &2  &0  &2  &  &  &   \\
    &12 &1  &0  &1  &  &  &   \\
    &14 &1  &0  &1  &  &  &   \\  \hline
    &3  &3  &0  &3  &  &  &   \\
    &4  &5  &1  &6  &  &  &   \\
    &5  &12 &2  &14 &  &  &   \\
    &6  &14 &2  &16 &  &  &   \\
    &8  &32 &0  &32 &  &  &   \\
14  &9  &16 &0  &16 &98&5 &103\\
    &10 &8  &0  &8  &  &  &   \\
    &11 &4  &0  &4  &  &  &   \\
    &12 &2  &0  &2  &  &  &   \\
    &13 &1  &0  &1  &  &  &   \\
    &15 &1  &0  &1  &  &  &   \\ \hline
    &2  &1  &0  &1  &  &  &   \\
    &4  &9  &2  &11 &  &  &   \\
    &6  &21 &7  &28 &  &  &   \\
    &7  &25 &7  &32 &  &  &   \\
    &8  &47 &17 &64 &  &  &   \\
15  &9  &32 &0  &32 &167&33&200\\
    &10 &16 &0  &16 &  &  &   \\
    &11 &8  &0  &8  &  &  &   \\
    &12 &4  &0  &4  &  &  &   \\
    &13 &2  &0  &2  &  &  &   \\
    &14 &1  &0  &1  &  &  &   \\
    &16 &1  &0  &1  &  &  &   \\ \hline
    &3  &3  &0  &3  &  &  &   \\
    &5  &11 &3  &14 &  &  &   \\
    &6  &23 &5  &28 &  &  &   \\
    &7  &28 &4  &32 &  &  &   \\
    &9  &64 &0  &64 &  &  &   \\
16  &10 &32 &0  &32 &193&12&205\\
    &11 &16 &0  &16 &  &  &   \\
    &12 &8  &0  &8  &  &  &   \\
    &13 &4  &0  &4  &  &  &   \\
    &14 &2  &0  &2  &  &  &   \\
    &15 &1  &0  &1  &  &  &   \\ 
    &17 &1  &0  &1  &  &  &   \\ \hline
    &2  &1  &0  &1  &  &  &   \\
    &3  &3  &1  &4  &  &  &   \\
    &4  &9  &3  &12 &  &  &   \\
    &5  &15 &7  &22 &  &  &   \\
    &6  &36 &14 &50 &  &  &   \\
    &7  &43 &13 &56 &  &  &   \\
    &8  &52 &12 &64 &  &  &   \\
17  &9  &92 &36 &128&379&86&465\\
    &10 &64 &0  &64 &  &  &   \\
    &11 &32 &0  &32 &  &  &   \\
    &12 &16 &0  &16 &  &  &   \\
    &13 &8  &0  &8  &  &  &   \\
    &14 &4  &0  &4  &  &  &   \\
    &15 &2  &0  &2  &  &  &   \\
    &16 &1  &0  &1  &  &  &   \\
    &18 &1  &0  &1  &  &  &   \\ \hline
    &4  &7  &2  &9  &  &  &   \\
    &5  &18 &2  &20 &  &  &   \\
    &7  &46 &10 &56 &  &  &   \\
    &8  &56 &8  &64 &  &  &   \\
    &10 &128&0  &128&  &  &   \\
    &11 &64 &0  &64 &  &  &   \\
18  &12 &32 &0  &32 &383&22&405\\
    &13 &16 &0  &16 &  &  &   \\
    &14 &8  &0  &8  &  &  &   \\
    &15 &4  &0  &4  &  &  &   \\
    &16 &2  &0  &2  &  &  &   \\
    &17 &1  &0  &1  &  &  &   \\
    &19 &1  &0  &1  &  &  &   \\ \hline
    &2  &1  &0  &1  &  &  &   \\
    &3  &3  &1  &4  &  &  &   \\
    &4  &13 &3  &16 &  &  &   \\
    &5  &21 &10 &31 &  &  &   \\
    &6  &41 &16 &57 &  &  &   \\
    &7  &67 &33 &100&  &  &   \\
    &8  &93 &19 &112&  &  &   \\
    &9  &102&26 &128&  &  &   \\
19  &10 &190&66 &256&787&174&961\\
    &11 &128&0  &128&  &  &   \\
    &12 &64 &0  &64 &  &  &   \\
    &13 &32 &0  &32 &  &  &   \\
    &14 &16 &0  &16 &  &  &   \\
    &15 &8  &0  &8  &  &  &   \\
    &16 &4  &0  &4  &  &  &   \\
    &17 &2  &0  &2  &  &  &   \\
    &18 &1  &0  &1  &  &  &   \\
    &20 &1  &0  &1  &  &  &   \\ \hline
    &3  &4  &0  &4  &  &  &   \\
    &6  &39 &9  &48 &  &  &   \\
    &7  &73 &23 &96 &  &  &   \\
    &8  &96 &16 &112&  &  &   \\
    &9  &114&14 &128&  &  &   \\
    &11 &256&0  &256&  &  &   \\
    &12 &128&0  &128&  &  &   \\
20  &13 &64 &0  &64 &838&62&900\\
    &14 &32 &0  &32 &  &  &   \\
    &15 &16 &0  &16 &  &  &   \\
    &16 &8  &0  &8  &  &  &   \\
    &17 &4  &0  &4  &  &  &   \\
    &18 &2  &0  &2  &  &  &   \\
    &19 &1  &0  &1  &  &  &   \\
    &21 &1  &0  &1  &  &  &   \\ \hline
    &2  &1  &0  &1  &  &  &   \\
    &4  &14 &3  &17 &  &  &   \\
    &5  &22 &9  &31 &  &  &   \\
    &6  &58 &17 &75 &  &  &   \\
    &8  &153&47 &200&  &  &   \\
    &9  &183&41 &224&  &  &   \\
    &10 &210&46 &256&  &  &   \\
    &11 &376&136&512&  &  &   \\
21  &12 &256&0  &256&1529&299&1828\\
    &13 &128&0  &128&  &  &   \\
    &14 &64 &0  &64 &  &  &   \\
    &15 &32 &0  &32 &  &  &   \\
    &16 &16 &0  &16 &  &  &   \\
    &17 &8  &0  &8  &  &  &   \\
    &18 &4  &0  &4  &  &  &   \\
    &19 &2  &0  &2  &  &  &   \\
    &20 &1  &0  &1  &  &  &   \\
    &22 &1  &0  &1  &  &  &   \\ \hline
    &3  &4  &0  &4  &  &  &   \\
    &4  &10 &2  &12 &  &  &   \\
    &5  &26 &6  &32 &  &  &   \\
    &6  &56 &12 &68 &  &  &   \\
    &7  &84 &17 &101&  &  &   \\
    &8  &156&36 &192&  &  &   \\
    &9  &194&30 &224&  &  &   \\
    &10 &227&29 &256&  &  &   \\
    &12 &512&0  &512&  &  &   \\
22  &13 &256&0  &256&1781&132&1913\\
    &14 &128&0  &128&  &  &   \\
    &15 &64 &0  &64 &  &  &   \\
    &16 &32 &0  &32 &  &  &   \\
    &17 &16 &0  &16 &  &  &   \\
    &18 &8  &0  &8  &  &  &   \\
    &19 &4  &0  &4  &  &  &   \\
    &20 &2  &0  &2  &  &  &   \\
    &21 &1  &0  &1  &  &  &   \\
    &23 &1  &0  &1  &  &  &   \\ \hline
    &2  &1  &0  &1   &  &  &   \\
    &3  &4  &1  &5   &  &  &   \\
    &4  &18 &4  &22  &  &  &   \\
    &5  &30 &13 &43  &  &  &   \\
    &6  &91 &31 &122 &  &  &   \\
    &7  &111&41 &152 &  &  &   \\
    &8  &253&90 &343 &  &  &   \\
    &9  &301&99 &400 &  &  &   \\
    &10 &375&73 &448 &  &  &   \\
    &11 &416&96 &512 &  &  &   \\
    &12 &766&258&1024&  &  &   \\
23  &13 &512&0  &512 &3390&706&4096\\
    &14 &256&0  &256 &  &  &   \\
    &15 &128&0  &128 &  &  &   \\
    &16 &64 &0  &64  &  &  &   \\
    &17 &32 &0  &32  &  &  &   \\
    &18 &16 &0  &16  &  &  &   \\
    &19 &8  &0  &8   &  &  &   \\
    &20 &4  &0  &4   &  &  &   \\
    &21 &2  &0  &2   &  &  &   \\
    &22 &1  &0  &1   &  &  &   \\
    &24 &1  &0  &1   &  &  &   \\ \hline
    &5  &38  &4 &42  &  &  &   \\
    &7  &115 &29&144 &  &  &   \\
    &9  &320 &64&384 &  &  &   \\
    &10 &402 &46&448 &  &  &   \\
    &11 &455 &57&512 &  &  &   \\
    &13 &1024&0 &1024&  &  &   \\
    &14 &512 &0 &512 &  &  &   \\
    &15 &256 &0 &256 &  &  &   \\
 24 &16 &128 &0 &128 &3378&200&3578   \\
    &17 &64  &0 &64  &  &  &   \\
    &18 &32  &0 &32  &  &  &   \\
    &19 &16  &0 &16  &  &  &   \\
    &20 &8   &0 &8   &  &  &   \\
    &21 &4   &0 &4   &  &  &   \\
    &22 &2   &0 &2   &  &  &   \\
    &23 &1   &0 &1   &  &  &   \\
    &25 &1   &0 &1   &  &  &   \\ \hline
    &2  &1   &0  &1   &  &  &   \\
    &3  &4   &1  &5   &  &  &   \\
    &4  &19  &4  &23  &  &  &   \\
    &6  &98  &34 &132 &  &  &   \\
    &7  &162 &66 &228 &  &  &   \\
    &8  &288 &94 &382 &  &  &   \\
    &9  &483 &203&686 &  &  &   \\
    &10 &641 &159&800 &  &  &   \\
    &11 &744 &152&896 &  &  &   \\
    &12 &845 &179&1024&  &  &   \\
    &13 &1522&526&2048&  &  &   \\
25  &14 &1024&0  &1024&6855&1418&8273 \\
    &15 &512 &0  &512 &  &  &   \\
    &16 &256 &0  &256 &  &  &   \\
    &17 &128 &0  &128 &  &  &   \\
    &18 &64  &0  &64  &  &  &   \\
    &19 &32  &0  &32  &  &  &   \\
    &20 &16  &0  &16  &  &  &   \\
    &21 &8   &0  &8   &  &  &   \\
    &22 &4   &0  &4   &  &  &   \\
    &23 &2   &0  &2   &  &  &   \\
    &24 &1   &0  &1   &  &  &   \\
    &26 &1   &0  &1   &  &  &   \\ \hline
    &3  &5   &0  &5   &  &  &   \\
    &4  &13  &3  &16   &  &  &   \\
    &5  &38  &6  &44  &  &  &   \\
    &6  &85  &24 &109 &  &  &   \\
    &7  &179 &37 &216 &  &  &   \\
    &8  &273 &61 &334 &  &  &   \\
    &9  &529 &138&667 &  &  &   \\
    &10 &651 &117&768 &  &  &   \\
    &11 &807 &89 &896 &  &  &   \\
    &12 &912 &112&1024&  &  &   \\
    &14 &2048&0  &2048&  &  &   \\
26  &15 &1024&0  &1024&  &  &   \\
    &16 &512 &0  &512 &  &  &   \\
    &17 &256 &0  &256 &  &  &   \\
    &18 &128 &0  &128 &  &  &   \\
    &19 &64  &0  &64  &  &  &   \\
    &20 &32  &0  &32  &  &  &   \\
    &21 &16  &0  &16  &  &  &   \\
    &22 &8   &0  &8   &  &  &   \\
    &23 &4   &0  &4   &  &  &   \\
    &24 &2   &0  &2   &  &  &   \\
    &25 &1   &0  &1   &  &  &   \\
    &27 &1   &0  &1   &  &  &   \\ \hline
    &   &   &   &   &  &  &   \\
    &   &   &   &   &  &  &   \\
    &   &   &   &   &  &  &   \\
    &   &   &   &   &  &  &   \\
    &   &   &   &   &  &  &   \\
    &   &   &   &   &  &  &   \\
    &   &   &   &   &  &  &   \\
    
\hline
\end{longtable}
\end{center}

\clearpage

\end{document}